\theoremstyle{theorem}
\newtheorem{theorem}{Theorem}[section]
\numberwithin{equation}{section}
\newtheorem{question}[theorem]{Question}
\newtheorem{lemma}[theorem]{Lemma}
\newtheorem{proposition}[theorem]{Proposition}
\newtheorem{problem}[theorem]{Problem}
\newtheorem{conjecture}[theorem]{Conjecture}
\theoremstyle{definition}
\newtheorem{definition}[theorem]{Definition}
\newtheorem{example}[theorem]{Example}
\newtheorem{remark}[theorem]{Remark}
\newcommand{\reg}{\operatorname{reg}}
\newcommand{\Ass}{\operatorname{Ass}}
\newcommand{\Char}{\operatorname{char}}
\newcommand{\Min}{\operatorname{Min}}
\newcommand{\het}{\operatorname{ht}}
\newcommand{\bight}{\operatorname{bight}}
\newcommand{\ch}{\operatorname{char}}
\newcommand{\N}{\mathbb{N}}
\renewcommand{\P}{\mathbb{P}}
\newcommand{\Q}{\mathbb{Q}}
\newcommand{\m}{\mathfrak{m}}
\DeclareMathOperator{\R}{\mathcal{R}}
\newcommand{\sra}{\mathcal{R}_s}
\DeclareMathOperator{\gt}{gt}
\DeclareMathOperator{\svd}{svd}
\DeclareMathOperator{\lcm}{lcm}
\title{Symbolic Rees algebras}
\author[Elo\'isa Grifo]{Elo\'isa Grifo}
\address{Department of Mathematics, University of California, Riverside, Riverside, CA 92521, USA}
\email{eloisa.grifo@ucr.edu}
\author[Alexandra Seceleanu]{Alexandra Seceleanu}
\address{Department of Mathematics, University of Nebraska -- Lincoln, Lincoln, NE 68588, USA}
\email{aseceleanu@unl.edu}
\dedicatory{Dedicated to David~Eisenbud on the occasion of his 75th birthday.}
\subjclass[2010]{Primary: 13A15. Secondary: 13H05}
\keywords{symbolic powers, symbolic Rees algebra, containment problem}
\begin{document}

\begin{abstract}
We survey old and new approaches to the study of symbolic powers of ideals. Our focus  is on the symbolic Rees algebra of an ideal, viewed both as a tool to investigate its symbolic powers and as a source of challenging problems in its own right. We provide an invitation to this area of investigation by stating several open questions.
\end{abstract}

\maketitle

\setcounter{tocdepth}{1} 
\tableofcontents

\section{Introduction}\label{section intro}

Symbolic powers arise from the theory of primary decomposition. It is often surprising to the novice algebraist that the powers of an ideal can acquire associated primes that were not associated to the ideal itself. In that sense, the symbolic powers of $I$ are more natural.

\begin{definition}
\label{def:symbolicpower}
Let $R$ be a noetherian ring and $I$ an ideal in $R$ with no embedded primes. 
The {\bf $n$-th symbolic power} of $I$ is the ideal 
$$I^{(n)} \quad \colonequals \bigcap_{P \in \Ass(R/I)} I^n R_p \cap R.$$
\end{definition}
This is the ideal obtained by intersecting the components in a primary decomposition of $I$ corresponding to the associated primes of $I$, which by assumption are all minimal. When $I$ does have embedded primes, there are two possible definitions of symbolic power to chose from: either taking $P$ to range over the associated primes of $I$, or over the minimal primes of $I$. To avoid this, we will focus on the case of ideals with no embedded primes. Note that the symbolic powers of a prime ideal are already very interesting, and thus our assumption that $I$ has no embedded primes is fairly mild. 

When $I$ is a radical ideal in $R = k[x_1, \ldots, x_d]$, where $k$ is a perfect field, $I^{(n)}$ coincides with the set of polynomials that vanish to order $n$ on the variety defined by $I$ \cite{Zariski,Nagata,EisenbudHochster}. In general, we always have $I^{(1)} = I$, by definition, and it is easy to show that $I^n \subseteq I^{(n)}$ always holds. However, given an ideal $I$ and some $n > 1$, determining whether the equality $I^n = I^{(n)}$ holds can be a very difficult question. This stems from the fact that computing primary decompositions is a difficult problem; as Decker, Greuel, and Pfister write in \cite{AlgorithmsPrimDec}, ``providing efficient algorithms for primary decomposition of an ideal [...] is [...] still one of the big challenges for computational algebra and computational algebraic geometry". In fact, even if one restricts to monomial ideals, the problem of finding a primary decomposition is NP-complete \cite{MonomialNP}. This is one of the reasons why many innocent sounding questions one could ask about symbolic powers remain open. 

Nevertheless, there exist sufficiently efficient methods for computation of symbolic power ideals using computer algebra systems such as {\em Macaulay2} \cite{M2}. Some of these methods are used in the Macaulay2 package {\em SymbolicPowers}; we refer to \cite{SymbolicPowersPackage} for an account of the functionality offered by this package.

Symbolic powers are ubiquitous throughout commutative algebra, with connections to virtually all topics in the field. For a more general survey on symbolic powers, we direct the reader to \cite{SurveySP}. In this survey, we focus on symbolic Rees algebras.

\section{Symbolic Rees algebras}

The symbolic powers of $I$ form a graded family of ideals, meaning that $I^{(a)} I^{(b)} \subseteq I^{(a+b)}$ for all $a$ and $b$. Thanks to this simple property, we can package together all the symbolic powers of $I$ to form a graded ring. This is the so called symbolic Rees algebra of $I$, which contains much information about $I$ and its symbolic powers, and the main character in this survey.

\begin{definition}[Symbolic Rees algebra]
	Let $R$ be a noetherian ring and $I$ an ideal in $R$. The {\bf symbolic Rees algebra} of $I$, also known as the {\bf symbolic blow-up ring} of $I$, is the graded ring
	$$\sra(I) \colonequals R[It, I^{(2)}t^{(2)}, \ldots]=\bigoplus_{n \geqslant 0} I^{(n)} t^n \subseteq R[t].$$
	The indeterminate $t$ of degree one is helpful in keeping track of the degree of elements in the symbolic Rees algebra. It helps distinguish an element $f\in I^{(n)}$, which we write $ft^n$, from the element $f\in I$, which we write $ft$.
\end{definition}

This construction is akin to that of the Rees algebra of $I$, which is the graded ring 
$$\mathcal{R}(I) \colonequals R[It, I^{2}t^2, \ldots]=\bigoplus_{n \geqslant 0} I^n t^n \subseteq R[t].$$

The study of Rees algebras is very rich and presents its own challenges (see \cite{VasconcelosBook95} for an overview), and yet the symbolic Rees algebra of $I$ is often much more complicated than the ordinary Rees algebra. While the study of symbolic Rees algebras is certainly inspired by Rees algebras, there is a crucial difference: $\mathcal{R}(I)$ is a finitely generated $R$-algebra, while $\sra(I)$ may fail to be an algebra-finite extension of $R$. Indeed, the Rees algebra of $I$ is generated over $R$ in degree $1$, by a (finite) generating set of $I$, that is, $\R(I)$ is a standard graded noetherian ring, i.e., generated as an $R$-algebra by elements of degree 1. In contrast, the symbolic Rees algebra may require infinitely many generators. As we will see, the symbolic Rees algebra of $I$ is a finitely generated $R$-algebra if and only if $\R_s(I)$ is a noetherian ring. Even if $\sra(I)$ is noetherian, it may be generated in different degrees; we introduce the generation type in \Cref{s:gtsvd} to quantify this. The symbolic Rees algebra of $I$ is generated in degree $1$ precisely if $I^n = I^{(n)}$ for all $n\in\N$, in which case $\mathcal{R}(I)$ and $\sra(I)$ coincide. Sufficient criteria for this equality are presented in \cite{HochsterCriteria} and \cite{RobVal}.
\subsection{A brief history} 

Although symbolic Rees algebras appear implicitly in the 1950s in work of Rees, Zariski, Nagata, and others surveyed below, this class of algebras did not acquire a name until several decades later. To our knowledge, the terminology ``symbolic Rees algebra"  appears for the first time in Huneke's paper \cite{HunekeCriterion} in 1982, while the monograph \cite{VasconcelosBook95} by Vasconcelos proposes the alternative terminology ``symbolic blowup algebra".

The first example of an ideal whose symbolic Rees algebra is not finitely generated appears in Rees' counterexample to Zariski's Formulation of Hilbert's 14th Problem (\Cref{q:Hilbert14}). 

\begin{question}[Hilbert's 14th Problem]
\label{q:Hilbert14}
	Let $k$ be a field. For all $n \geqslant 1$, and all subfields $K$ of $k(x_1, \ldots, x_n)$, is $K \cap k[x_1, \ldots, x_n]$
	finitely generated over $k$?
\end{question}

An important special case that provided the original motivation for this question concerns the ring of invariants of a linear action of a group of matrices on a polynomial ring over a field. For $R=k[x_1,\ldots, x_n]$, a polynomial ring with coefficients in a field $k$ equipped with a linear  action of a group $G\subseteq {\rm GL}_n(k)$, one studies the subring of $G$-invariant polynomials
$$R^G=\{f\in R \mid g \cdot f = f \text{ for all } g \in G\}.$$
 A fundamental question in invariant theory is whether $R^G$ is finitely generated as a $k$-algebra. For finite groups, an affirmative answer is due to E.~Noether \cite{Noether}. The finite generation of $R^G$ is the particular case of \Cref{q:Hilbert14} where $K$ is the subfield of elements of the fraction field of $R$ fixed by $G$.

 We point the reader to the surveys \cite{Mumford, Survey14} for more on Hilbert's 14th problem, and we will instead focus on the connections between symbolic Rees algebras and this famous question. The foundation of this connection was laid by Zariski in the early 1950s in \cite{Zariski54} by interpreting the rings $K \cap k[x_1, \ldots, x_n]$ as rings of rational functions on a nonsingular projective variety $X$ with poles restricted to a specified divisor $D$. Such varieties $X$ can be obtained geometrically by the procedure of blowing up, and $D$ is usually taken to be the exceptional divisor of the blow up $X$.

Zariski \cite{Rees1958ProblemZariski} formulated a more general version of \Cref{q:Hilbert14}, by taking any integrally closed domain that is finitely generated over $k$ in place of $R=k[x_1, \ldots, x_d]$. The first counterexample to Zariski's version of \Cref{q:Hilbert14} was given by Rees \cite{Rees1958ProblemZariski}, and this is where the connection with symbolic Rees algebras first appears. The crux of Rees' proof, while not written in the language of symbolic Rees algebras, consists of showing that if $P$ is a height $1$ prime ideal in the affine cone over an elliptic curve with infinite order in the divisor class group, then its symbolic Rees algebra is not finitely generated. We give a numerical example to illustrate the principles used by Rees.
 \begin{example}[Rees]
 Consider the elliptic curve $C$ cut out by the equation $x^3-y^2z-2z^3$ in the projective plane $\P_{\Q}^2$. The point $p=(3,5,1)$ is a rational point on this curve which has infinite order with respect to the group law on $C$ \cite[Example 2.4.6(3)]{EllipticCurvesBook}. Consider the coordinate ring $R=\Q[x,y,z]/(x^3-y^2z-2z^3)$ of $C$ and the ideal $P=(x-3z, y-5z)$ of $R$ which defines $p$. Then \cite{Rees1958ProblemZariski} yields that $\R_S(P)$ is not a finitely generated $\Q$-algebra. 
 
 By contrast, consider the point $q=(2,3,1)$ on the elliptic curve with coordinate ring $S= \Q[x,y,z]/(x^3-y^2z+z^3)$. The point $q$ has order six with respect to the group law of this curve \cite[Example 2.4.2]{EllipticCurvesBook}, and examining the ideal $Q=(x-2z, y-3z)$ defining this point with Macaulay2 \cite{M2} yields
\[
Q^{(6)}=(12x^2-6xy+y^2-6xz-6yz+9z^2),
\]
which is a principal ideal. Moreover, $Q^{(6n)}=(Q^{(6)})^n$ for all $n\geqslant 0$, which as we will see in \Cref{noetherian equivalences} implies that $\R_S(Q)$ is  a finitely generated $\Q$-algebra. 
 \end{example}

 In the late 1950s, Nagata found the first example of an ideal $I$ in a polynomial ring whose symbolic Rees algebra is not finitely generated, giving a counterexample to Hilbert's 14th Problem \cite{Nagata59}. In fact, he constructed a ring of invariants which is not a finitely generated algebra. The ideal constructed by Nagata defines a set of 16 points in the projective plane, and hence is not a prime ideal like in the example provided by Rees. Nagata's method is to relate the structure of $\R_s(I)$ to an interpolation problem in the projective plane, namely, that for each $m \geqslant 1$, there does not exist a curve of degree $4m$ having multiplicity at least $m$ at each of 16 general points of the projective plane.

In the 1980s, Roberts constructed new examples of symbolic Rees algebras that are not finitely generated based on Nagata's examples. His work shows that $\sra(I)$ may fail to be finitely generated even when $I$ is a prime ideal in a regular ring \cite{RobertsExample}, thus answering a question of Cowsik in the negative \cite{Cowsik}. Roberts' examples are prime ideals in a polynomial ring over a field of characteristic $0$, and later Kurano \cite{Kurano} showed that if we consider the same examples in prime characteristic $p$, their symbolic Rees algebras are in fact Noetherian. Roberts' examples \cite{RobertsExample}, while prime, are not analytically irreducible, meaning that these prime ideals do not stay prime after passing to the completion; he later improved this by providing an example that was in fact analytically irreducible \cite{RobertsExample2}. 

Still, as shown below, finite generation has powerful consequences for some symbolic Rees algebras, and thus it is natural to ask when this occurs. Huneke gave a general criterion for a symbolic Rees algebra of a height $2$ prime ideal in a $3$-dimensional regular local ring to be finitely generated \cite{HunekeCriterion,HunekeHilbertSymb}, which we will discuss in more detail in \Cref{subsection criteria}.

In the early 1980s, Cowsik showed that if $I$ determines a curve in $\mathbb{A}^n_k$, where $k$ is an infinite field, and the symbolic Rees algebra of $I$ is finitely generated, then $I$ is a set-theoretic complete intersection \cite{Cowsik}. This has been exploited to show that certain curves are indeed set-theoretic complete intersections in \cite{EliahouCurve}. A modern generalization of Cowsik's result states that, if the symbolic Rees algebra of an ideal $I$ is finitely generated, the arithmetic rank of $I$, that is, the least number of generators of an ideal whose radical agrees with the radical of $I$, is bounded above by the polynomial order of growth for the number of generators of $I^{(n)}$ as a function of $n$; see \cite[Proposition 2.3]{DaoMontano}.

One interesting case is that of space monomial curves, which are Zariski closures of images of maps of the form $\mathbb{A}^1\to \mathbb{A}^3, t\mapsto (t^a, t^b, t^c)$. We abbreviate this by referring to a monomial curve as $(t^a, t^b, t^c)$. The defining ideals of space monomial curves were known to be set theoretic complete intersections since 1970 \cite{Herzog1970}, and thus one could hope that in fact their symbolic Rees algebras are always finitely generated. This is, however, false: Goto, Nishida, and Watanabe \cite{NonnoetherianSymb} found the first counterexamples, a family of choices of $(a, b, c)$ which give infinitely generated symbolic Rees algebras over a field in characteristic $0$. We record this interesting family of examples below.

\begin{example}[Goto--Nishida--Watanabe]
\label{ex:monomialcurve}
Let $P$ be the defining ideal in the power series ring $k \llbracket x,y,z \rrbracket$ over a field $k$ of the space monomial curve 
$$x=t^{7n-3}, \, y=t^{(5n-2)n}, \, z=t^{8n-3} \quad  \text{ where } n \geqslant 4 \textrm{ and } n\not \equiv 0 \!\!\!\!\pmod{3}.$$
For example, when $n=4$, our curve is parametrized by $x = t^{25}$, $y = t^{72}$, $z = t^{29}$, and
$$P=(y^3-x^4z^4,x^{11}-yz^7,x^7y^2-z^{11}).$$
Then $\R_s(P)$ is a non-Cohen-Macaulay noetherian ring if $\ch k>0$, and $\R_s(P)$ is not a noetherian ring if $\ch k=0$.
\end{example}

Other examples where $\sra(P)$ is not Cohen-Macaulay with $P$ the defining ideal of $(t^a, t^b, t^c)$ in prime characteristic were already known by \cite{MorimotoGoto}. 

The late 1980s and early 1990s saw a program to classify when these symbolic Rees algebras are (or are not) finitely generated \cite{Morales,SchenzelExamples,HemaMonomialCurves}. Most notably, Cutkosky gave criteria which say, for example, that over any field, $\sra(P)$ is finitely generated whenever $(a+b+c)^2 > abc$. Cutkosky's work \cite[Lemma 7]{Cutkosky} also uncovered a deep connection to a different geometric problem: that $\sra(P)$, where $P$ defines $(t^a,t^b,t^c)$, is noetherian if and only if a certain space --- the blow-up at a general point of the weighted projective space $\P(a,b,c)$ --- is a Mori dream space --- meaning its Cox ring is noetherian. Using this connection, Gonz\'alez Anaya, Gonz\'alez, and Karu \cite{GonzalezKaru16,GonzalezKaru2,GonzalezGonzalezKaru,GonzalezGonzalezKaru2,GonzalezGonzalezKaru3} have more recently found several large families of examples in characteristic $0$ that in particular recover the original family of examples of Goto--Nishida--Watanabe of non-noetherian $\sra(P)$; in fact, they give a complete characterization of when $\sra(P)$ is (non)noetherian for large families of curves of type $(t^a,t^b,t^c)$. The smallest of their examples to date are the curves $(t^7,t^{15},t^{26})$ and $(t^{12},t^{13},t^{17})$, each of these examples being smallest in a different manner. For more examples of this kind, see also \cite{HeMori}. The original family of non-noetherian examples in \cite{NonnoetherianSymb} has also been generalized via different methods in \cite{JohnsonReedMonomialCurves}.

Finally, the story of symbolic Rees algebras of space monomial curves has deeper connections to Hilbert's 14th Problem: Kurano and Matsuoka showed that whenever the symbolic Rees algebra of the defining ideal $P$ of $(t^a,t^b,t^c)$ is not noetherian, then in fact $\sra(P)$ is a counterexample to Hilbert's 14th Problem \cite{KuranoMatsuoka}.

Even when the symbolic Rees algebra $\sra(P)$ of the curve $(t^a,t^b,t^c)$ in indeed noetherian, it may still be generated in various degrees. As a corollary of a result of Huneke's \cite[Corollary 2.5]{Huneke1986}, we know $P^{(n)} = P^n$ for all $n \geqslant 1$, or equivalently $\sra(P)$ is generated in degree $1$, exactly when $P$ is a complete intersection. In the language of section \ref{s:gtsvd}, we say that $\sra(P)$ has generation type $1$. Herzog and Ulrich characterized when the symbolic Rees algebra $\sra(P)$ is generated in degree up to $2$, or has generation type $2$, and showed that this implies that $P$ is self-linked \cite{MonCurvesGen2}. The cases when $\sra(P)$ has generation type $3$ \cite{NoethSymbReesAlgDegrees} and $4$ \cite{Degree4SpaceMonCurvesI,Degree4SpaceMonCurvesII} have also been completely characterized; these characterizations are all in terms of the Hilbert-Burch matrix of $P$.

With the subject of finite generation presenting such a difficult problem, the literature on other ring-theoretic properties of $\sra(I)$ is not as vast. Watanabe \cite{Watanabe} asked whether $\sra(I)$ must be Cohen-Macaulay whenever it is noetherian, where $I$ is a divisorial ideal in a strongly F-regular ring $R$. Watanabe constructed an example \cite[Example 4.4]{Watanabe} of a divisorial ideal $I$ in an F-rational ring whose Rees algebra is noetherian but not Cohen-Macaulay. When $R$ is strongly F-regular, Singh showed that the answer to Watanabe's question is affirmative provided that a certain auxiliary ring is finitely generated over $R$ \cite{Singh}. The construction of this auxiliary ring is an iterated symbolic Rees algebra.

In positive characteristic, the symbolic Rees algebra of the canonical module $\omega = \omega_R$ of a local, normal, complete ring $R$ plays an important role in studying Frobenius actions on the injective hull of the residue field. A significant construction in this context is the anticanonical cover $\bigoplus_{n \geqslant 0} {\rm Hom}_R(\omega^{(n)},R)$.  The number of generators for this ring as an algebra over $R$, if finite, bounds the Frobenius complexity of $R$ as shown by Enescu and Yao \cite{EnescuYao}.

The research and literature surrounding  symbolic Rees algebras is abundant and growing at a steady rate. While we cannot do complete justice to this topic  by presenting an exhaustive review, we expand in some directions which are closest to our interests in the following sections.

\section{Criteria for noetherianity}\label{subsection criteria}

In this section we discuss criteria for finite generation, and equivalently noetherianity, of symbolic Rees algebras and structural invariants of finitely generated symbolic Rees algebras. 

\subsection{Noetherianity}

The most comprehensive criterion, described below in \Cref{noetherian equivalences} $(4)\Leftrightarrow (1)$, states that, under mild hypotheses, finite generation of a symbolic Rees algebra $\R_s(I)$ is equivalent to the fact that there exists a Veronese subalgebra $\bigoplus_{n\geqslant 0} I^{(kn)}t^{kn}$ isomorphic to the (ordinary) Rees algebra 
$\R(I^{(k)})=\bigoplus_{n\geqslant 0} (I^{(k)})^nt^{n}$. An equivalent assertion is that a Veronese subalgebra of $\R_s(I)$ admits a standard grading.

The various parts of the following criterion appear in different places in the literature: the equivalence of (1) and (3) is developed in \cite{Rees1958ProblemZariski} and (4) appears in work of Schenzel \cite[Theorem 1.3]{SchenzelFiltrations}. We include a proof since this result is central to our discussion.

\begin{proposition}[Standard graded subalgebra criterion]
\label{noetherian equivalences}
	Let $R$ be a noetherian ring and $I$ an ideal in $R$. The following are equivalent:
	\begin{enumerate}[(1)]
		\item $\sra(I)$ is a finitely generated $R$-algebra.
		\item $\sra(I)$ is a noetherian ring.
		\item There exists $d$ such that for all $n \geqslant 1$,
		$$I^{(n)} \quad = \sum_{a_1 + 2 a_2 + \cdots + d a_d = n} I^{a_1} \left( I^{(2)} \right)^{a_2} \cdots \left( I^{(d)} \right)^{a_d}.$$
	\end{enumerate}
Furthermore, when these equivalent conditions hold, then
\begin{enumerate}[(4)]
		\item There exists $k$ such that $I^{(kn)} = \left( I^{(k)} \right)^n$ for all $n \geqslant 1$.
	\end{enumerate}
Conditions $(1) - (4)$ are equivalent whenever $R$ is an excellent ring.
\end{proposition}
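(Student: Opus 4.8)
The plan is to establish (3)$\Rightarrow$(1)$\Leftrightarrow$(2) and (1)$\Rightarrow$(3) for an arbitrary noetherian $R$, then (1)$\Rightarrow$(4) again for arbitrary noetherian $R$, and finally (4)$\Rightarrow$(1) using excellence, which is the only substantive step. The equivalences (1)$\Leftrightarrow$(2) are pure graded ring theory: a finitely generated algebra over the noetherian ring $R$ is noetherian by the Hilbert basis theorem, and conversely, if $\sra(I)$ is noetherian then its irrelevant ideal $\bigoplus_{n\geq 1} I^{(n)}t^n$ is finitely generated by homogeneous elements, which then generate $\sra(I)$ as an $R$-algebra. For (1)$\Leftrightarrow$(3), I would observe that condition (3) says exactly that the degree-$n$ component $I^{(n)}t^n$ lies in the $R$-subalgebra generated by the components of degrees $1,\dots,d$; since each ideal $I^{(m)}$ is finitely generated, this presents $\sra(I)$ as generated by the finite set $\bigoplus_{m=1}^d I^{(m)}t^m$, giving (1). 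Conversely, if $\sra(I)$ is generated by finitely many homogeneous elements of degrees at most $d$, then grouping the factors of a degree-$n$ monomial in these generators according to their degrees rewrites it inside some $I^{a_1}(I^{(2)})^{a_2}\cdots (I^{(d)})^{a_d}$ with $a_1+2a_2+\cdots+da_d = n$; as the reverse containment is the graded-family property, this yields (3).

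For (1)$\Rightarrow$(4), write $\sra(I)$ as generated over $R$ by homogeneous elements of degrees $d_1,\dots,d_r$. I would invoke the standard fact that, for a finitely generated $\mathbb{N}$-graded algebra, there exists a positive integer $k$ (any sufficiently large multiple of $\lcm(d_1,\dots,d_r)$) for which the Veronese subalgebra $\sra(I)^{(k)}$ is standard graded over $R$, that is, $S_{kn}=(S_k)^n$ where $S=\sra(I)$. Unwinding the $t$-grading, $S_{kn}=I^{(kn)}t^{kn}$ and $(S_k)^n=(I^{(k)})^n t^{kn}$, so the relation reads $I^{(kn)}=(I^{(k)})^n$, which is (4). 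The only content is the Veronese lemma, whose proof is an exact bin-packing argument: once $k$ is a large multiple of the lcm of the generator degrees, every degree-$kn$ monomial in the generators can be split into $n$ sub-monomials of degree exactly $k$.

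The substantive implication is (4)$\Rightarrow$(1), and this is where excellence enters. Assuming $I^{(kn)}=(I^{(k)})^n$ for all $n$, set $A\colonequals \sra(I)^{(k)}=\bigoplus_{n\geq 0}(I^{(k)})^n t^{kn}=\mathcal{R}(I^{(k)})$, the ordinary Rees algebra of $I^{(k)}$; this is standard graded, hence a finitely generated $R$-algebra and noetherian. I claim $B\colonequals \sra(I)$ is integral over $A$: any homogeneous $ft^j\in I^{(j)}t^j$ satisfies $(ft^j)^k=f^k t^{jk}$ with $f^k\in (I^{(j)})^k\subseteq I^{(jk)}$, so $(ft^j)^k\in I^{(jk)}t^{jk}=A_j\subseteq A$, giving a monic equation over $A$; since $B$ is generated by such elements, $B$ is integral over $A$. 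Assuming first that $R$ is a domain, one has $\operatorname{Frac}(A)=\operatorname{Frac}(R)(t^k)$ and $\operatorname{Frac}(B)=\operatorname{Frac}(R)(t)$, so $\operatorname{Frac}(B)$ is a degree-$k$ extension of $\operatorname{Frac}(A)$ and $B$ lies in the integral closure $\overline{A}$ of $A$ inside $\operatorname{Frac}(B)$. Now $A=\mathcal{R}(I^{(k)})$ is finitely generated over the excellent ring $R$, hence excellent, hence a Nagata (universally Japanese) ring, so $\overline{A}$ is a module-finite $A$-algebra. As $B$ is an $A$-submodule of the noetherian $A$-module $\overline{A}$, it is itself a finite $A$-module, hence noetherian and a finitely generated $A$-algebra, and therefore a finitely generated $R$-algebra, which is (1); the general case reduces to the domain case by passing to minimal primes.

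The main obstacle is precisely this final step: absent a finiteness-of-integral-closure input, an integral extension $A\subseteq B$ with $A$ noetherian need not be module-finite, and this is the unique place where excellence is needed and cannot be omitted. Everything else is formal, modulo citing the Veronese standard-grading lemma used in (1)$\Rightarrow$(4).
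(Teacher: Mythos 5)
Your strategy coincides with the paper's at every stage: the equivalences $(1)$--$(3)$ are handled the same way, $(1)\Rightarrow(4)$ is the Rees/Veronese argument, and $(4)\Rightarrow(1)$ proceeds by showing $\sra(I)$ is integral over $A=\R(I^{(k)})$ via the monic equation $T^k-u^k$ and then invoking finiteness of integral closure for excellent rings, finishing by a submodule-of-a-noetherian-module (or Artin--Tate) observation. Two points of comparison. For $(1)\Rightarrow(4)$ you cite the standard ``eventually standard Veronese'' lemma as a black box, whereas the paper reproduces Rees's pigeonhole argument and thereby extracts the explicit value $k=d\cdot d!$, which is recorded in \Cref{value of k} and reused later to bound $\svd(I)$; your version is correct but loses that effectivity. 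More substantively, in $(4)\Rightarrow(1)$ the paper takes the integral closure of $A$ inside the \emph{ring} $R[t]$ and quotes module-finiteness of that relative integral closure, which treats all excellent $R$ uniformly, while you pass to $\operatorname{Frac}(A)\subseteq\operatorname{Frac}(B)$ and dispose of the general case by ``passing to minimal primes.''

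That last reduction is the one genuine gap in your write-up. It is not routine here: neither the symbolic powers $I^{(n)}$ nor the algebras $A\subseteq B\subseteq R[t]$ decompose along the minimal primes $Q_i$ of $R$ in any obvious way, and if $R$ is non-reduced the map from $B$ to $\prod_i (R/Q_i)[t]$ has a nilpotent kernel whose finiteness as an $A$-module also requires an argument. In the reduced case one can salvage your route --- $B$ embeds into $\prod_i B_i$ where $B_i$ is the image of $B$ in $(R/Q_i)[t]$, each $B_i$ lies in the integral closure of the excellent domain $A_i$ in a degree-$k$ field extension, and $\prod_i B_i$ is then a noetherian $A$-module containing $B$ as a submodule --- but this needs to be said, and the non-reduced case needs more. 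The cleanest fix is to adopt the paper's device: work with the integral closure of $A$ inside $R[t]$, for which module-finiteness over $A$ holds because $A$ is a finitely generated algebra over the excellent (hence Nagata) ring $R$.
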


\begin{proof}
	The fact that $(1)$ implies $(2)$ is a consequence of Hilbert's Basis Theorem. Moreover, since $\sra(I)$ is an $\mathbb{N}$-graded algebra and $\sra(I)_0 = R$ is a noetherian ring, the equivalence between $(1)$ and $(2)$ is a general fact about graded $R$-algebras; see for example \cite[Proposition 1.5.4]{BrunsHerzog} for a proof.
	Statement $(3)$ says that $\sra(I)$ is generated in degree up to $d$ as an $R$-algebra, and thus is equivalent to $(1)$.
	
	To show that $(3)$ implies $(4)$, we follow \cite[Lemma 2]{Rees1958ProblemZariski}, where in fact a stronger statement is proved. We will show that $k$ can in fact be taken to be $k = d \cdot d!$.
	
	First, suppose that $n \geqslant k$. For each choice of $a_1 + 2 a_2 + \cdots + d a_d = n \geqslant d \cdot d!$, we must have $i a_i \geqslant d!$ for some $i$, by the pigeonhole principle. Moreover, $q \colonequals \frac{d!}{i}$ is an integer, so
	$$I^{a_1} \left( I^{(2)} \right)^{a_2} \cdots \left( I^{(d)} \right)^{a_d} = \left( I^{(i)} \right)^{q} I^{a_1} \left( I^{(2)} \right)^{a_2} \cdots \left( I^{(i)} \right)^{a_i - q} \cdots \left( I^{(d)} \right)^{a_d} \subseteq I^{(d!)} I^{(n-d!)}.$$
	In particular, $I^{(n)} \subseteq I^{(d!)} I^{(n-d!)}$ for all $n \geqslant d \cdot d!$, but since $ I^{(d!)} I^{(n-d!)} \subseteq I^{(n)}$ holds because symbolic powers form a graded family, in fact we have shown that $I^{(n)} = I^{(d!)} I^{(n-d!)}$.
	
	Now consider any $n \geqslant 1$. Since $n k \geqslant k = d \cdot d!$, then
	$$I^{(kn)} = I^{(d!)} I^{(kn-d!)} = \left( I^{(d!)} \right)^2 I^{(kn-2d!)} = \cdots = \left( I^{(d!)} \right)^{d} I^{(kn-d \cdot d!)} \subseteq I^{(d \cdot d!)} I^{(kn-d \cdot d!)},$$
	so that 
	$$I^{(kn)} = I^{(d \cdot d!)} I^{(kn-d \cdot d!)} = I^{(k)} I^{(k(n-1))}.$$
	By induction, the statement follows.
	
	On the other hand, if $(4)$ holds, then the algebra
	$$A \colonequals \bigoplus_{n \geqslant 0} I^{(kn)} t^{kn} = \bigoplus_{n \geqslant 0} \left( I^{(k)} t^{k} \right)^n  \subseteq \sra(I) \subseteq R[t]$$
	is finitely generated. The fact that $(4)$ implies the remaining equivalent statements will follow once we show that $\sra(I)$ is a finitely generated algebra over $A$. To do that, we follow the argument in \cite[(2.2)]{SchenzelFiltrations}.
	
	Let $B$ denote the integral closure of $A$ inside $R[t]$. Recall\footnote{The book \cite{HunekeSwansonIntegral2006} is a comprehensive reference on the subject of integral closure.} that $B$ is the subring of $R[t]$ given as follows:
	$$B = \left\lbrace f \in R[t]: f^d + a_{d-1} f^{d-1} + \cdots + a_1 f + a_0 = 0 \textrm{ for some } f_i \in A \right\rbrace.$$
	We claim that $\sra(I) = \bigoplus I^{(n)} t^n \subseteq B$. To show that, consider $u \in I^{(i)} t^i$. Then
	$$u^k \in \left( I^{(i)} \right)^k t^{ik} \subseteq I^{(ki)} t^{ki} = \left( I^{(k)} t^k \right)^i,$$
	so that $u$ is a root of $T^k - u^k$. Since $u^k\in A$,  $u$ is integral over $A$, which implies that $u \in B$. Since $\sra(I)$ is generated by such elements, we conclude that $\sra(I) \subseteq B$. Moreover, $B$ is a finitely generated module over $A$, by \cite[Remark 12.3.11 or Theorem 9.2.2]{HunekeSwansonIntegral2006}. Therefore, $\sra(I)$ must be finitely generated over $A$ by the Artin-Tate theorem \cite{ArtinTate}.
\end{proof}

For \Cref{noetherian equivalences} $(4) \Rightarrow (3)$, the condition we need is that the integral closure of a finitely generated $R$-algebra $B$ in a finite extension is a finitely generated algebra over $B$; rings with this property are called Nagata rings (see \cite[Chapter 13]{Matsumura}). This holds whenever $R$ is excellent or analytically unramified, and in particular every polynomial or power series ring over a field has this property.

\begin{remark}
\label{value of k}
	The proof of  \Cref{noetherian equivalences} shows that when the symbolic Rees algebra is noetherian and generated in degree up to $d$, then for $k = d \cdot d!$, we do have $I^{(kn)} = \left( I^{(k)} \right)^n$ for all $n \geqslant 1$. In fact, it is shown in \cite[Lemma 2]{Rees1958ProblemZariski} that if the symbolic Rees algebra is generated in degrees $a_1, \ldots, a_s$, and $r$ is the least common multiple of $a_1, \ldots, a_s$, then we can take $k = sr$.
\end{remark}

Under mild assumptions, part $(3)$ of \Cref{noetherian equivalences} above might be rewritten, as follows:

\begin{lemma}\label{lemma k equivalence}
	Let $R$ be an excellent ring, and $I$ an ideal in $R$. Suppose that $k$ is such that $I^{(kn)} = \left( I^{(k)} \right)^n$ for all $n \geqslant 1$. Then there exists $A \geqslant 1$ such that for all $n \geqslant 1$, if $n = qk+r$, with $0 \leqslant r < k$, then
	$$I^{(n)} = \sum_{a=0}^A \left( I^{(k)} \right)^{q-a} I^{(ak+r)}.$$
\end{lemma}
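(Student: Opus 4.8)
The plan is to read the hypothesis $I^{(kn)} = \big(I^{(k)}\big)^n$ as exactly condition $(4)$ of \Cref{noetherian equivalences}, so that (as $R$ is excellent) $\sra(I)$ is a finitely generated $R$-algebra, and then to analyze $\sra(I)$ as a module over its $k$-th Veronese subalgebra
$$\mathcal{V} \colonequals \bigoplus_{j \geqslant 0} I^{(kj)} t^{kj}.$$
By hypothesis $\mathcal{V}$ equals the ordinary Rees algebra $\R(I^{(k)})$, so it is noetherian and generated over $R$ by its degree-$k$ piece $I^{(k)} t^k$. The first key point is that $\sra(I)$ is a \emph{finitely generated} $\mathcal{V}$-module: any homogeneous $u = f t^i$ with $f \in I^{(i)}$ satisfies $u^k = f^k t^{ik}$ with $f^k \in \big(I^{(i)}\big)^k \subseteq I^{(ik)} = \big(I^{(k)}\big)^i$, so $u$ is a root of $X^k - u^k \in \mathcal{V}[X]$ and $\sra(I)$ is integral over $\mathcal{V}$; being in addition a finitely generated algebra over $\mathcal{V}$, it is module-finite over the noetherian ring $\mathcal{V}$. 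This is the same finiteness already extracted in the proof of \Cref{noetherian equivalences}.

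Next I would decompose $\sra(I)$ by residues modulo $k$. For each $r \in \{0, 1, \ldots, k-1\}$ set $M_r \colonequals \bigoplus_{q \geqslant 0} I^{(qk+r)} t^{qk+r}$. Since $\mathcal{V}$ lives in degrees divisible by $k$, each $M_r$ is a graded $\mathcal{V}$-submodule and $\sra(I) = \bigoplus_{r=0}^{k-1} M_r$. As $\sra(I)$ is module-finite over the noetherian ring $\mathcal{V}$, each $M_r$ is a finitely generated graded $\mathcal{V}$-module, hence is generated by finitely many homogeneous elements; all degrees occurring in $M_r$ have the form $ak + r$, so these generators have degrees $a k + r$ with $a$ ranging over a finite set of nonnegative integers. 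I then let $A \geqslant 1$ be any integer exceeding all such $a$, taken uniformly over every $r$.

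With $A$ fixed, I would verify the displayed identity in each degree $n = qk+r$. The inclusion $\supseteq$ is immediate: for $a \leqslant q$ one has $\big(I^{(k)}\big)^{q-a} I^{(ak+r)} = I^{(k(q-a))} I^{(ak+r)} \subseteq I^{(n)}$ by the hypothesis and the graded-family property. For the reverse inclusion, consider the $\mathcal{V}$-submodule $N \colonequals \sum_{a=0}^{A} \mathcal{V} \cdot \big( I^{(ak+r)} t^{ak+r} \big) \subseteq M_r$. Since every generator of $M_r$ has degree at most $Ak+r$, it lies in $N$, whence $N = M_r$; extracting the degree-$n$ component and using $\mathcal{V}_{(q-a)k} = \big(I^{(k)}\big)^{q-a} t^{(q-a)k}$ gives $I^{(n)} = \sum_{a=0}^{\min(A,q)} \big(I^{(k)}\big)^{q-a} I^{(ak+r)}$, where summands with $a > q$ drop out because $\mathcal{V}$ vanishes in negative degrees. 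This is precisely the asserted formula, under the convention that terms with $q - a < 0$ are zero.

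The step I expect to be most delicate is not any single computation but the degree bookkeeping around the edge case $q < A$: there the summand $a = q$ already equals $I^{(n)}$, so the equality holds, but one must state explicitly that terms with $q-a < 0$ are discarded, since $\big(I^{(k)}\big)^{q-a}$ is otherwise undefined. Everything else — the integrality and module-finiteness of $\sra(I)$ over $\mathcal{V}$, and the standard fact that a finitely generated graded module over a noetherian graded ring is generated in bounded degree — is routine once the Veronese subalgebra $\mathcal{V}$ is placed at the center of the argument.
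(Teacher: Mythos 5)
Your proof is correct and follows essentially the same route as the paper's: both arguments rest on $\sra(I)$ being a finitely generated graded module over the $k$-th Veronese subalgebra $\bigoplus_{j\geqslant 0} (I^{(k)}t^k)^j$, choose $A$ to bound the degrees of the module generators, and then read off the degree-$n$ component. Your version is somewhat more careful than the paper's (the explicit residue-class decomposition, the discarding of terms with $q-a<0$, and the observation that the $\supseteq$ inclusion needs the graded-family property), but the underlying idea is identical.
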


\begin{proof}
	As before, note that the $R$-algebra 
	$$B \colonequals \bigoplus_{n \geqslant 0} I^{(kn)} t^{kn} = \bigoplus_{n \geqslant 0} \left( I^{(k)} t^{k} \right)^n  \subseteq R[t]$$
	is finitely generated, and that $\sra(I)$ is finitely generated over $B$.
	
	Suppose that $\sra(I)$ is generated over $B$ in degrees $a_1, \ldots, a_d$. Then
	$$\bigoplus_{n \geqslant 0} I^{(n)} t^n = \sra(I) = I^{(a_1)} t^{a_1} B \oplus \cdots \oplus I^{(a_d)} t^{a_d} B = \bigoplus_{m \geqslant 1} I^{(a_i)} \left( I^{(k)} \right)^m t^{a_i + km}.$$
	Finally, the theorem follows once we collect the pieces in degree $n$.
\end{proof}

While very useful, the criteria in \Cref{noetherian equivalences} often prove challenging to apply because they require checking infinitely many equalities of ideals. The next results of Huneke \cite[Theorems 3.1 and 3.25]{HunekeHilbertSymb} present ideal-theoretical criteria for the symbolic Rees ring $\R_s(P)$ of a height two prime $P$ of a 3-dimensional regular ring $R$ to be noetherian, which are relatively simple to apply. These criteria suffice to establish that every affine space curve of degree three as well as every monomial space curve of degree four have noetherian symbolic Rees algebras.

\begin{proposition}[Multiplicity criterion]
Let $R$ be a regular local ring with $\dim(R)=3$ and infinite residue field and let $P$ be a height two prime ideal of $R$. The following are equivalent:
\begin{enumerate}
\item $\R_s(P)$ is a finitely generated $R$-algebra.
\item There exist $k,l \geqslant 1$, $f\in P^{(k)}$, $g\in P^{(l)}$ and $x\not\in P$ such that 
$$\lambda(R/(f,g,x))=kl\lambda\left(R/(P+(x)\right).$$
\item There exist $f.g\in P$ such that $\sqrt{(f,g)}=P$ and the leading forms $f^*, g^*$ of $f,g$ in the associated graded ring  of $PR_P$ form a regular sequence.
\end{enumerate}
\end{proposition}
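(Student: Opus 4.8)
The plan is to prove this equivalence (Huneke's multiplicity criterion) by establishing a cycle, leveraging \Cref{noetherian equivalences} to translate noetherianity into a concrete finiteness condition on symbolic powers, and then extracting multiplicity and regular-sequence information from a height-two prime in a three-dimensional regular local ring. The key geometric input is that $R$ is regular of dimension three, so $R_P$ is a two-dimensional regular local ring (in fact a localization at a height-two prime), and for such $P$ the symbolic power $P^{(n)}$ is exactly the $P$-primary component of $P^n$; modding out by a sufficiently general element $x \notin P$ reduces computations to a one-dimensional setting where lengths are additive.

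First I would prove $(3) \Rightarrow (1)$. If $f^*, g^*$ form a regular sequence in the associated graded ring of $PR_P$, then $\mathrm{gr}_P(R_P)$ is a complete intersection, so $(f,g)R_P$ and $P R_P$ have the same Hilbert function; in particular $(f,g)R_P = (f,g)^{(n)}R_P \supseteq P^{(n)}R_P$ will force $P^{(n)} = (f,g)^n : x^\infty$ to stabilize, giving the generation condition in \Cref{noetherian equivalences}(3). Concretely, the regular-sequence hypothesis means $(f,g)$ is a reduction-like ideal whose powers already fill up the symbolic powers: one shows $P^{(2)} = (f,g) + P^{(2)}\cap(\text{higher order})$ collapses, yielding finite generation in degree at most $2$. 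Next, $(1) \Rightarrow (2)$ is the substantive length-theoretic step: using \Cref{noetherian equivalences}(4), fix $k$ with $P^{(kn)} = (P^{(k)})^n$, choose $f \in P^{(k)}$ and $g \in P^{(l)}$ that are part of a generating set realizing a reduction of the symbolic powers after localizing and generizing, and pick $x \notin P$ general. The equality $\lambda(R/(f,g,x)) = kl\,\lambda(R/(P+(x)))$ is then the statement that the multiplicity of the ideal $(f,g)$ equals the product of the "symbolic degrees" $k$ and $l$ times the multiplicity of $P$ along the general hyperplane $x=0$; this is exactly the associativity/additivity formula for Hilbert-Samuel multiplicities applied to the finite-length module $R/(f,g,x)$, combined with the fact that finite generation makes the relevant graded pieces behave multiplicatively.

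The main obstacle, and the heart of the argument, will be the implication $(2) \Rightarrow (3)$: extracting an actual regular sequence of leading forms from a numerical length equality. Here the plan is to interpret the equality $\lambda(R/(f,g,x)) = kl\,\lambda(R/(P+(x)))$ as an equality of multiplicities that forces $(f,g)$ to be its own $P$-symbolic reduction and simultaneously forces no "defect" in the associated graded ring. Specifically, one shows that $\lambda(R/(f,g,x)) \geqslant kl\,\lambda(R/(P+(x)))$ always holds (an inequality coming from $f \in P^{(k)}$, $g \in P^{(l)}$ and comparing intersection multiplicities), with equality precisely when $f^*, g^*$ form a regular sequence in $\mathrm{gr}_{PR_P}(R_P)$ — the equality case of a multiplicity inequality detecting the Cohen-Macaulayness (equivalently, the absence of extra associated primes) of the associated graded ring. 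I expect this to be the delicate part, since one must rule out that the length coincidence arises for spurious reasons; the standard tool is that for a two-dimensional regular local ring, equality in the multiplicity bound forces the two-generated ideal to be integrally closed with a complete intersection associated graded ring, which is exactly the regular-sequence condition. I would then close the cycle $(3) \Rightarrow (1) \Rightarrow (2) \Rightarrow (3)$, using \Cref{noetherian equivalences} freely for the passage between finite generation and the stabilization of symbolic powers.
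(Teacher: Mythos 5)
First, a point of comparison: the paper does not actually prove this proposition --- it is quoted from Huneke \cite[Theorems 3.1 and 3.25]{HunekeHilbertSymb} (see also \cite{HunekeCriterion}) --- so there is no in-paper argument to measure yours against, and I can only assess the sketch on its own terms. On those terms there is a genuine gap: the entire content of the theorem is the passage between the numerical condition in (2) (or the regular-sequence condition in (3)) and finite generation of $\R_s(P)$, and that passage is nowhere argued. Huneke's proof rests on a filtration inequality of the shape
$$\lambda(R/(f,g,x)) \;\geqslant\; \sum_{n\geqslant 0} \lambda\Bigl(P^{(n)}\big/\bigl(fP^{(n-k)}+gP^{(n-l)}+xP^{(n)}\bigr)\Bigr),$$
where equality forces $P^{(n)}=fP^{(n-k)}+gP^{(n-l)}+\cdots$ for all $n$, i.e.\ $\R_s(P)=R[Pt,\,ft^k,\,gt^l]$, and conversely that generation statement yields the length equality. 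Your substitutes --- ``finite generation makes the relevant graded pieces behave multiplicatively'' for $(1)\Rightarrow(2)$, and the collapse of $P^{(2)}$ for $(3)\Rightarrow(1)$ --- neither produce elements $f,g$ realizing the equality nor explain why the equality propagates to \emph{every} symbolic power. Note also that the correct conclusion is generation in degrees up to $\max(k,l)$, with $k=\operatorname{ord}_{PR_P}(f)$ and $l=\operatorname{ord}_{PR_P}(g)$, not ``in degree at most $2$''.

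Several intermediate claims are also false as stated. Since $R_P$ is a two-dimensional regular local ring, $\mathrm{gr}_{PR_P}(R_P)$ is a polynomial ring in two variables over $\kappa(P)$; the hypothesis does not make it ``a complete intersection'' in any new sense, and $(f,g)R_P$, an ideal of finite colength, certainly does not have the same Hilbert function as $PR_P$. The usable facts here are the Bezout-type inequality $\lambda(R_P/(f,g)R_P)\geqslant kl$, with equality exactly when $f^*,g^*$ have no common factor, together with the associativity formula $\lambda(R/(f,g,x))\geqslant \sum_Q \lambda(R_Q/(f,g)R_Q)\,\lambda(R/(Q+xR))$ over the minimal primes $Q$ of $(f,g)$. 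These do yield $(2)\Leftrightarrow(3)$ essentially as you indicate --- equality kills all contributions from primes other than $P$ and pins $\lambda(R_P/(f,g)R_P)$ at $kl$ --- and that portion of the plan is sound. But without the filtration argument linking the length equality to generation of all the $P^{(n)}$, the cycle $(3)\Rightarrow(1)\Rightarrow(2)\Rightarrow(3)$ does not close.
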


It is possible to extend this criterion to reduced ideals of height two that are not necessarily prime. For example, by \cite[Proposition 3.5]{HaHu}, if an ideal $I$ defines a set of $s$ points in $\P^2$ and if there exist $m\in \N$ and $f, g\in I^{(m)}$ such that $f,g$ form a regular sequence and $\deg(f)\deg(g)=m^2s$, then $\R_s(I)$ is a noetherian ring. This criterion can be applied to show that any set of $s\leqslant 8$ points in $\P^2$ gives rise to a noetherian symbolic Rees algebra. However, the converse implication is no longer valid, as shown in \cite{NagelSeceleanu}.

It turns out that the analytic spread $\ell(I)$ of $I$, which is defined to be the Krull dimension of the special fiber ring of $I$, $\R(I)/\m\R(I)$, plays an important role in the study of symbolic Rees algebras. Its contribution is due to work of McAdam on asymptotic primes of $I$. For the rest of this section, we assume $R$ is an excellent domain, although a weaker condition, locally quasi-unmixed, would suffice. Brodmann shows in \cite{BroAssympAssociatedPrimes} that the following set, known as the set of {\em asymptotic primes} of $I$, is finite:
\[
A^*(I) \colonequals \bigcup_{n\geqslant 0} \Ass(I^n).
\] 
McAdam \cite[Theorem 3]{McAdam} (see also \cite[Proposition 4.1]{McAdamBook}) shows that $P\in A^*(I)$ if and only if $\ell(IR_P)=\dim(R_P)$. 
Setting 
$$J=\bigcap_{P\in A^*(I)\setminus\Min(I)} P$$ 
yields another description of the symbolic powers of $I$ as saturations: $I^{(n)}=I^n:J^\infty$. We are now ready to state another criterion for finite generation of the symbolic Rees algebra. This appears for primary ideals in work of Katz and Ratliff \cite[Theorem A]{KatzRatliff}, and in the form presented here in \cite[Theorem 2.6]{CutkoskyHerzogSrinivasan}. 

\begin{proposition}[Analytic spread criterion]
Let $(R, \m)$ be an excellent domain. Then $\R_s(I)$ is a finitely generated $R$-algebra if and only if for all $P \in V (J)$ we have that   
\[\ell\left((I:J^\infty)R_P\right)< \dim(R_P).\] 
\end{proposition}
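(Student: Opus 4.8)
The plan is to connect the two sides of the equivalence through the saturation description of the symbolic Rees algebra and McAdam's theorem. Since $I^{(n)} = I^n : J^\infty$, the symbolic Rees algebra is the degreewise $J$-saturation of the ordinary Rees algebra:
$$\sra(I) = \bigoplus_{n \geqslant 0} (I^n : J^\infty)\, t^n = \R(I) :_{R[t]} \big(J\,\R(I)\big)^\infty.$$
Thus $\sra(I)$ is obtained from the finitely generated (hence noetherian) algebra $\R(I)$ by saturating along $V(J)$, and the question is exactly whether this saturation remains algebra-finite. Since $R$ is excellent, all of conditions $(1)$--$(4)$ of \Cref{noetherian equivalences} are available; I will use freely that finite generation of $\sra(I)$ is equivalent to the multiplicative stabilization $I^{(kn)} = (I^{(k)})^n$ for some $k$ and all $n$, and I will treat McAdam's criterion $P \in A^*(L) \Leftrightarrow \ell(LR_P) = \dim R_P$ as a general tool, applied to the ideal $L = I : J^\infty$ appearing in the statement.

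For the direction $(\Leftarrow)$, assume $\ell\big((I:J^\infty)R_P\big) < \dim R_P$ for every $P \in V(J)$. The goal is to show that the saturation $\R(I) : (J\R(I))^\infty$ is of finite type, equivalently that condition $(3)$ of \Cref{noetherian equivalences} holds. I would argue locally and then globalize: the obstruction to algebra-finiteness of the saturation is supported on $V(J)$, and by Brodmann's theorem only the finitely many asymptotic primes in $V(J)$ can contribute. At each such prime $P$, the strict inequality $\ell < \dim R_P$ prevents the powers from acquiring further embedded behavior, so that the local symbolic Rees algebra is noetherian; finiteness of the relevant prime set then allows a single stabilizing exponent $k$ to be chosen uniformly, yielding global finite generation.

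For the direction $(\Rightarrow)$, assume $\sra(I)$ is finitely generated, so that $I^{(kn)} = (I^{(k)})^n$ for some $k$ and all $n$. Fixing $P \in V(J)$ and localizing, finite generation caps the growth rate of the symbolic powers, and hence bounds the Krull dimension of the special fiber of the associated Veronese Rees algebra. Translating this dimension count back through McAdam's criterion should produce the required strict inequality $\ell\big((I:J^\infty)R_P\big) < \dim R_P$. Concretely, I would relate the special fiber dimension of $\R(I^{(k)})$ after localizing, or of the corresponding local blow-up, to the analytic spread appearing in the statement.

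The step I expect to be the main obstacle is the two-way passage between the global algebra-finiteness of $\sra(I)$ and the pointwise numerical condition on analytic spreads. Making this precise rests on two inputs: the finiteness of $A^*(I)$, which confines the bad behavior to finitely many primes of $V(J)$ and permits a uniform choice of $k$, and the excellent hypothesis, which is exactly what supplies McAdam's theorem together with the implication $(4) \Rightarrow (1)$ of \Cref{noetherian equivalences}. I anticipate that the delicate point is the uniformity in $(\Leftarrow)$: upgrading the local finite generation forced by the analytic spread bound at each prime to honest global finite generation of $\sra(I)$.
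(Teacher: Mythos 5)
First, a point of context: the paper does not prove this proposition --- it is quoted from \cite{KatzRatliff} and \cite[Theorem 2.6]{CutkoskyHerzogSrinivasan} --- so there is no internal argument to measure yours against; your attempt has to stand on its own. Judged that way, it is an outline of where a proof should go rather than a proof: in each direction the step carrying the entire content of the theorem is asserted instead of argued. For $(\Leftarrow)$, the sentence ``the strict inequality $\ell<\dim R_P$ prevents the powers from acquiring further embedded behavior, so that the local symbolic Rees algebra is noetherian'' is exactly the statement to be proved, restated locally. Nothing in your text explains the mechanism by which a bound on the analytic spread of the \emph{single} ideal $I:J^\infty$ controls the saturations $I^n:J^\infty$ of all powers simultaneously; this is where \cite{KatzRatliff} and \cite{CutkoskyHerzogSrinivasan} do their real work, converting the absence of asymptotic primes in $V(J)$ (via McAdam) into the stabilization $I^{(kn)}=\bigl(I^{(k)}\bigr)^n$ demanded by \Cref{noetherian equivalences}. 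The globalization you flag as the ``delicate point'' is in fact the easier half, since $A^*(I)$ is finite by Brodmann; the local statement is the hard part, and it is missing.

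For $(\Rightarrow)$ there is, beyond vagueness, a concrete mismatch you would have to resolve. Finite generation gives $I^{(kn)}=\bigl(I^{(k)}\bigr)^n$ for some $k$, and the natural application of McAdam's criterion is then to the ideal $I^{(k)}$: since its ordinary powers are $J$-saturated, no $P\in V(J)$ lies in $A^*\bigl(I^{(k)}\bigr)$, whence $\ell\bigl(I^{(k)}R_P\bigr)<\dim R_P$. But the proposition asserts the inequality for $\ell\bigl((I:J^\infty)R_P\bigr)=\ell\bigl(I^{(1)}R_P\bigr)$, and the analytic spreads of $I^{(1)}$ and $I^{(k)}$ need not agree just because the two ideals have the same radical. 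Your phrase ``translating this dimension count back through McAdam's criterion should produce the required strict inequality'' sits precisely on this gap, and it is not a formality: if $I$ has no embedded primes then $I:J^\infty=I$, and for any $P\in A^*(I)\setminus\Min(I)$ --- a set which is nonempty whenever some $I^n\neq I^{(n)}$, and which lies in $V(J)$ by the very definition of $J$ --- the quoted form of McAdam's theorem gives $\ell(IR_P)=\dim R_P$, so the degree-one inequality fails even for ideals (such as squarefree monomial ideals) whose symbolic Rees algebra is noetherian. This indicates that the criterion one can actually prove, and the one in \cite[Theorem 2.6]{CutkoskyHerzogSrinivasan}, involves $\ell\bigl((I^n:J^\infty)R_P\bigr)$ for all $n\geqslant 1$; any correct argument must engage with exactly the point your outline skips.
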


In a similar vein, Goto, Herrmann, Nishida, and Villamayor give a sufficient criterion for the symbolic Rees algebra to be Noetherian in terms of an equimultiplicity condition of some symbolic power. Their result in \cite[Theorem 3.3]{GHNV} states that if $\ell (I^{(n)})=\het(I^{(n)})$ for some natural number $n$ and ideal $I$ in an unmixed local ring, then $\R_s(I)$ is noetherian.

A large class of ideals with finitely generated symbolic Rees algebra is the class of monomial ideals. While none of the above criteria apply to show this, finite generation of the respective symbolic Rees algebras follows from  Gordan's lemma, which says that the set of all lattice points in a rational cone is a finitely generated affine semigroup. This approach is taken by Herzog, Hibi, and Trung in \cite[Proposition 1.4]{vertexcover}. For squarefree monomial ideals, finite generation of the symbolic Rees algebra was previously shown in work of Lyubeznik; see \cite[Proposition 1]{Lyubeznik}.

\subsection{Generation type and standard Veronese degree}
\label{s:gtsvd}
In this section, we explore the maximum degree of elements required to generate the symbolic Rees ring as an $R$-algebra and the minimum degree of a standard graded Veronese subalgebra. 

Following  Bahiano \cite{Bahiano}, we define
\begin{definition}
\label{def:generation type}
The {\em generation type} of a symbolic Rees algebra $\R_s(I)$ is the value
\[
\gt(R_s(I)) \colonequals\inf \{d \mid \R_s(I)=R[ It, I^{(2)}t^2 ,\ldots ,I^{(d)}t^d] \}.
\]
\end{definition}

Note that $\gt(R_s(I))\in \N\cup\{\infty\}$  and $\gt(R_s(I))\in \N$ if and only if $\R_s(I)$ is a noetherian ring. A challenging problem is to determine or bound this invariant for interesting classes of ideals.

\begin{problem}
Find effective bounds on $\gt(R_s(I))$, when finite, in terms of invariants of $I$.
\end{problem}
This problem has been studied predominantly in combinatorial contexts, when $I$ is a monomial ideal \cite{Bahiano, clutters, vertexcover}; it has also been studied for the ideal defining a space monomial curve \cite{NoethSymbReesAlgDegrees,Degree4SpaceMonCurvesI,Degree4SpaceMonCurvesII}, and in some cases of more general monomial curves, for example in \cite{DCruzDeg2}. For a monomial ideal $I$, finding a minimal set of algebra generators for $\R_s(I)$ translates into finding a Hilbert basis for an appropriate convex polyhedron \cite[Corollary 3.2]{clutters}. This is a computationally intensive problem, which can nevertheless be approached with the aid of specialized software \cite{Normaliz,4ti2}. 

When $I$ is a monomial ideal and $I^{(n)}=\overline{I^n}$ for each $n\in\N$, i.e.  when all the symbolic powers are the integral closures of the corresponding ordinary powers, then \cite[Corollary 3.11]{blowupmonomial} yields $\gt(R_s(I)) \leqslant \dim(R)-1$. When $I$ is the edge ideal of a simple graph, \cite{Bahiano} yields $\gt(R_s(I)) \leqslant (\dim(R)-1)(\dim(R)-\het(I))$. However, for arbitrary monomial ideals the best known bound seems to be given by \cite[Theorem 5.6]{vertexcover}:
\[
\gt(\R_s(I))\leqslant \frac{(\dim(R) + 1)^{(\dim(R)+3)/2}}{2^{\dim(R)}}.
\]

\Cref{noetherian equivalences} reveals the importance of standard graded Veronese subalgebras of the symbolic Rees algebra. We introduce a new invariant that captures the least degree where they occur.
\begin{definition}
\label{def:std Veronese degree}
The {\em standard Veronese degree} of an ideal $I$ is the value
\[
\svd(I) \colonequals\inf \{k \mid  (I^{(k)})^n=I^{(kn)} \text{ for all } n\in \N\}.
\]
\end{definition}
As before, $\svd(I)<\infty$ is equivalent to $\R_s(I)$ being a noetherian ring by \Cref{noetherian equivalences}, and the proof of this proposition yields the upper bound $\svd(I)\leqslant \gt(\R_s(I))\cdot \gt(\R_s(I))!$. \Cref{value of k} yields a sharper upper bound. For particular families of ideals, specific upper bounds can be found in the literature, for example for some space monomial curve families \cite{Morales} and for ideals defining Fermat-type point configurations \cite{NagelSeceleanu} (cf. \Cref{Fermat}).

We explore these invariants for a specific family of monomial ideals below, with an eye towards evaluating the optimality of these bound.

\begin{example}
\label{prop:symmshifted}
Let $n$ and $h\leqslant n-1$ be positive integers and let $I_{n,h}$ denote the following monomial ideal in the polynomial ring $R_n=k[x_1,\ldots,x_n]$ with coefficients in a field $k$
\[
I_{n,h} \colonequals \bigcap_{1\leqslant i_1<i_2<\cdots<i_h\leqslant n} (x_{i_1},x_{i_2}, \cdots, x_{i_h}).
\]
This family of ideals is known as {\em monomial star configurations}.
Then the following hold:
\begin{equation}
\label{eq:star}
\R_s(I)=R_n[x_{i_1}x_{i_2} \cdots x_{i_{n-h+m}} t^m, 1\leqslant m \leqslant h, i_1<i_2<\cdots<i_{n-c+m}\leqslant n],
\end{equation}
\[
\gt(\R_s(I_{n,h})) = h, \quad \text{ and } \quad 
\svd(I_{n,h}) \text{ is divisible by }  \lcm(1,2, \ldots,h).
\]
\end{example}

\begin{proof}
The description of the symbolic Rees algebra in \eqref{eq:star} is established using different notation  in \cite[Proposition 4.6]{vertexcover}. 

It follows that the generation type of this algebra is $h$, provided that the unique algebra generator of degree $h$, $\prod_{i=1}^n x_i\in I^{(h)}$, listed in \eqref{eq:star} cannot be decomposed as a product of squarefree monomials $m_1,m_2, \ldots, m_s$ with $s>1$, $m_i=x_{i,1}x_{i,2} \cdots x_{i,n-h+a_i} \in I^{(a_i)}$. This would yield $a_1+\cdots+a_s=h$  and because the degrees of these monomials are $\deg(m_i)=n-h+a_i$ we obtain the following impossible inequality
\[
\deg(m_1)+\cdots+\deg(m_s) \geqslant a_1+\cdots+a_s+s(n-h)=h+s(n-h)>n=\deg \prod_{i=1}^n x_i.
\]

Continuing to a discussion of the standard Veronese degree, let us first observe that the lowest degree of a nonzero element of $I_{n,h}^{(m)}$ is $\alpha(I_{n,h}^{(m)})=m+(n-h)\lceil\frac{m}{h}\rceil$. A simple calculation now verifies that when $\frac{m}{h}$ is not an integer, then $\alpha(I_{n,h}^{(mk)})<k\alpha(I_{n,h}^{(m)})=\alpha((I_{n,h}^{(m)})^k)$ whenever $k>h$. This restricts the possible values for $r\colonequals\svd(I_{n,h})$ to multiples of the height $h$. However, further restrictions on $r$ are imposed by consideration of the manner in which our family of ideals contracts  with respect to the inclusions $R_{n-i}\subset R_n$. Specifically, for all $h,n,m$ there are identities
\begin{eqnarray*}
I_{n,h}^{(m)}\cap R_{n-i} &=&\bigcap_{1\leqslant i_1<i_2<\cdots<i_h\leqslant n} (x_{i_1},x_{i_2}, \cdots, x_{i_h})^m \cap  R_{n-i}\\
&=&\bigcap_{j=h-i}^{h} \ \bigcap_{1\leqslant i_1<i_2<\cdots<i_j\leqslant n-i} (x_{i_1},x_{i_2}, \cdots, x_{i_j})^m \\
&=&  I_{n-i,h-i}^{(m)} \cap I_{n-i,h-i+1}^{(m)} \cap\cdots \cap  I_{n-i,h}^{(m)} \\
 &=& I_{n-i,h-i}^{(m)},
\end{eqnarray*}
where we make the convention that $I_{n,u}=R_n$ whenever $u<0$.
Similarly one deduces
\[
I_{n,h}^{m}\cap R_{n-i} =(I_{n,h}\cap R_{n-i})^m=I_{n-i,h-i}^m.
\]

If $I^{(rm)}=(I^{(r)})^m$ for all $m\in \N$ then for $0\leqslant i\leqslant h-1$ we deduce the identities
\begin{eqnarray*}
I_{n,h}^{(rm)}\cap R_{n-i} &=& (I_{n,h}^{(r)})^m \cap R_{n-i}, \text{ i.e.,} \\
I_{n-i,h-i}^{(rm)}&=& (I_{n-i,h-i}^{(r)})^m .
\end{eqnarray*}
By the previous reasoning, we see that $r$ must be divisible by all integers $1\leqslant h-i\leqslant h$, thus
$\lcm(1,2,\ldots, h)$ divides $r$. 
\end{proof}

We conjecture that for the family of ideals in Proposition \ref{prop:symmshifted} there is in fact an equality $\svd(I_{n,h}) =\lcm(1,2, \ldots,h)$. This prompts the following question:
\begin{question}
Can the bound in  \Cref{value of k} be improved for all monomial ideals $I$ to 
\[\svd(I)\leqslant \text{ the }\lcm \text{ of the degrees of any set of algebra generators for }\R_s(I)? \]
\end{question}

At this time we are unaware of any ideals that satisfy $\svd(I)<\gt(I)$. Hence we ask:
\begin{question}
Does the inequality $\gt(I) \leqslant \svd(I)$ hold for every ideal $I$?
\end{question}

\section{Applications to containment problems and asymptotic invariants}

\subsection{The Containment Problem}

Containments of the form $I^n \subseteq I^{(n)}$ are a direct consequence of \Cref{def:symbolicpower}, which further implies that $I^b \subseteq I^{(a)}$ if and only if $b\geqslant a$. Containments of the converse type $I^{(a)} \subseteq I^b$ are a lot more interesting. Together these form the basis for comparison of the ordinary and symbolic ideal topologies, which has been pioneered by Schenzel \cite{SchenzelContainmentProblem} and later Swanson \cite{Swanson}. This line of inquiry is nowadays known as the containment problem:

\begin{question}[Containment problem]
\label{containmentproblem}
 Let $R$ be a ring and let $I$ be an ideal of $R$ without embedded primes. For which pairs $a,b$ does the containment $I^{(a)} \subseteq I^b$ hold?
\end{question}

If for each value of $b$ there is a pair $a, b$ answering the above question, then the families $\lbrace I^{(n)} \rbrace_n$ and $\lbrace I^{n} \rbrace_n$ are cofinal, and induce equivalent topologies. In \cite{Swanson}, Swanson shows that the equivalence of ordinary and symbolic ideal topologies is linear, that is, if $\lbrace I^{(n)} \rbrace_n$ and $\lbrace I^{n} \rbrace_n$ are cofinal then there is a constant $c$, possibly depending on $I$, such that $I^{(cn)} \subseteq I^n$ for all $n \geqslant 1$. When the ambient ring is regular, this constant can be expressed explicitly, in terms of the \emph{big height} of $I$, the largest height of on associated prime of $I$. In fact, in this case the constant $c$ can even be taken uniformly, depending only on $R$, as shown by the following important results \cite{ELS,comparison,MaSchwede}.

\begin{theorem}[Ein--Lazarsfeld--Smith, Hochster--Huneke, Ma--Schwede]
\label{thm:containment}
 Let $R$ be a regular ring and $I$ an ideal in $R$. If $h$ is the big height of $I$, then $I^{(hn)} \subseteq I^n$ for all $n\geqslant 1$. 
  In particular, if $d=\dim(R)$, then $I^{((d-1)n)} \subseteq I^n$ for $n\geqslant 1$.
\end{theorem}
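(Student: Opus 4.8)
\emph{Overall strategy.} The plan is to reduce the ``in particular'' clause to the main containment and then to prove the main containment separately in each characteristic, since the three sets of authors correspond precisely to the three regimes (equal characteristic $0$, characteristic $p$, and mixed characteristic). For the reduction of the last sentence: the big height $h$ of $I$ is at most $d=\dim(R)$, and if some minimal prime of $I$ has height $d$ it must be a maximal ideal that is an isolated point of $V(I)$; since the symbolic and ordinary powers have the same primary component at such an isolated maximal component, the containment may be checked after discarding it, so we may assume $h\leqslant d-1$ and apply the main statement. I would present the characteristic $p$ case in full, as it is the conceptual heart, and treat the other two by reduction or by indicating the substitute for Frobenius.

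\emph{The characteristic $p$ core.} Following Hochster--Huneke, I would isolate the \emph{Frobenius containment}
\[
I^{(hq)}\subseteq I^{[q]} \qquad \text{for every } q=p^e,
\]
where $I^{[q]}$ denotes the ideal generated by $q$-th powers of elements of $I$. To prove it, note that since $R$ is regular the Frobenius map is flat (Kunz), so $\Ass(R/I^{[q]})=\Ass(R/I)=\Min(I)$ because $I$ has no embedded primes; hence the containment can be verified after localizing at each minimal prime $P$, which has height at most $h$. In the regular local ring $R_P$ the ideal $IR_P$ is $PR_P$-primary, $(I^{(hq)})R_P=(IR_P)^{hq}$, and $\dim R_P\leqslant h$, so the claim becomes the purely combinatorial fact that if $J$ is primary to the maximal ideal of an $s$-dimensional regular local ring then $J^{sq}\subseteq J^{[q]}$ (a pigeonhole count on exponents, using that a sufficiently high power of the maximal ideal lands in its $q$-th Frobenius power).

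\emph{Passing to ordinary powers.} To deduce $I^{(hn)}\subseteq I^n$, I would use that every ideal in a regular ring of characteristic $p$ is tightly closed, so $I^n=(I^n)^\ast$, together with the faithful flatness of Frobenius, which reduces the claim to $(I^{(hn)})^{[q]}\subseteq (I^n)^{[q]}$ for a single $q$. Since $(I^{(hn)})^{[q]}\subseteq (I^{(hn)})^q\subseteq I^{(hnq)}$ and $(I^n)^{[q]}=(I^{[q]})^n$, it remains to show $I^{(hnq)}\subseteq (I^{[q]})^n$; localizing at the minimal primes of $I$ this follows from the same combinatorial lemma raised to the $n$-th power, while the embedded primes of $I^n$ are absorbed by a uniform test element (equivalently, a Noetherian induction). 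This bookkeeping with embedded primes and the uniform constant is the delicate but routine part of the argument.

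\emph{The main obstacle.} The genuinely hard content is cross-characteristic. In equal characteristic $0$ one can spread out and reduce modulo $p$ (the whole statement involves only finitely generated data), so the characteristic $p$ result applies; alternatively one runs the Ein--Lazarsfeld--Smith argument with asymptotic multiplier ideals, where subadditivity plays the role of the Frobenius power estimate above. The hardest case is mixed characteristic, where Frobenius is unavailable: following Ma--Schwede one replaces tight closure by a closure operation built from perfectoid big Cohen--Macaulay algebras and must re-establish the analogue of the Frobenius containment in that setting, which is the deepest ingredient of the whole theorem.
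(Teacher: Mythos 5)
First, a point of reference: the paper does not prove \Cref{thm:containment} --- it is quoted as a black box from \cite{ELS,comparison,MaSchwede} --- so there is no in-paper argument to compare yours against; what you have written is an outline of the cited proofs. Your architecture does track those proofs (Frobenius powers and flatness of Frobenius in characteristic $p$, reduction mod $p$ or asymptotic multiplier ideals in equal characteristic $0$, perfectoid techniques in mixed characteristic), and the localization step for $I^{(hq)}\subseteq I^{[q]}$ using $\Ass(R/I^{[q]})=\Ass(R/I)=\Min(I)$ is exactly right. But two steps in your characteristic-$p$ core have genuine gaps. The ``purely combinatorial fact'' that $J^{sq}\subseteq J^{[q]}$ for an \emph{arbitrary} $\mathfrak{m}$-primary ideal $J$ of an $s$-dimensional regular local ring is not a pigeonhole count: the pigeonhole gives $\mathfrak{a}^{s(q-1)+1}\subseteq\mathfrak{a}^{[q]}$ only for an ideal $\mathfrak{a}$ generated by $s$ elements, and the justification you offer (a high power of the maximal ideal lies in $\mathfrak{m}^{[q]}$) lands you in $\mathfrak{m}^{[q]}$, not in $J^{[q]}$. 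The statement is in fact true, but it needs a minimal reduction plus Brian\c{c}on--Skoda. The clean fix for radical $I$ is that $IR_P=PR_P$ \emph{is} the maximal ideal of $R_P$, generated by $\het(P)\leqslant h$ elements, so the pigeonhole applies verbatim; you should either restrict to radical ideals here or import Brian\c{c}on--Skoda explicitly.

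The more serious gap is the passage from $n=q$ to arbitrary $n$. Faithful flatness of Frobenius does reduce $I^{(hn)}\subseteq I^n$ to $(I^{(hn)})^{[q]}\subseteq(I^n)^{[q]}$ for a single $q$, and $(I^{(hn)})^{[q]}\subseteq(I^{(hn)})^{q}\subseteq I^{(hnq)}$; but the target containment $I^{(hnq)}\subseteq(I^{[q]})^n$ cannot be verified by localizing only at $\Min(I)$, because $\Ass(R/(I^n)^{[q]})=\Ass(R/I^n)$ and powers of $I$ acquire embedded primes --- the very phenomenon that motivates symbolic powers in the first place. No ``uniform test element'' can absorb these at a single value of $q$: a test-element argument is intrinsically an all-$q$ argument. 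The correct assembly, for which you already have every piece, is the tight closure one: for $u\in I^{(hn)}$ and \emph{every} $q$ one has $u^q\in I^{(hnq)}$, localization at $\Min(I)$ plus the pigeonhole for $\mathfrak{m}=PR_P$ gives $cu^q\in(I^n)^{[q]}$ for a single $c$ avoiding all minimal primes (chosen using the finiteness of $A^*(I)$, as in \cite{comparison}), and then $u\in(I^n)^*=I^n$ because regular rings are weakly $F$-regular. Your equal-characteristic-$0$ and mixed-characteristic paragraphs are fair as pointers to \cite{ELS} and \cite{MaSchwede}, though note that reduction mod $p$ for an arbitrary regular ring containing $\mathbb{Q}$, rather than an affine algebra, requires the descent machinery invoked in \cite{comparison}.
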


If we remove the regular assumption, and ask that $R$ be a complete normal local domain, it is still an open problem in general to determine whether there exists a uniform constant $c$, depending only on $R$, such that $P^{(cn)} \subseteq P^n$ for all $n \geqslant 1$ and all primes $P$. When $P$ is a prime ideal in a complete normal local domain, the $P$-symbolic and $P$-adic topologies are equivalent \cite{SchenzelPrimes}. More generally, if $R$ is an excellent Noetherian domain, the $P$-symbolic and $P$-adic topologies are equivalent for every prime $P$ if and only if going down holds between $R$ and its integral closure \cite{HKVcor}. Some of the recent progress on this problem in \cite{HKV,HKVFinExt,HKabelian,RobertHH,RobertUSTP,RobertNormal} is also described in some detail in \cite{SurveySP}.

More surprisingly, the containment problem is not settled even in the regular case. In fact, the containments provided by \Cref{thm:containment} are not necessarily best possible. In fact, examining the proof of the above theorem in \cite{comparison} one sees that in the case of positive characteristic, $\Char(R)=p$, it relies on containments of the form $I^{(hq)}\subseteq I^{[q]}$, where $q=p^e$ for $e\in \N$ and $I^{[q]}$ denotes the $q$th Frobenius power of $I$. The stronger containment $I^{(hq-q+1)}\subseteq I^{[q]}$ follows in this context using localization and the pigeonhole principle as explained in \cite[p.351]{comparison}. This yields the following improved containments:

\begin{proposition}
 Let $R$ be a regular ring, $I$ an ideal of $R$, and $h$ the big height of $I$. If $\Char(R)=p>0$, the containments $I^{(hq-h+1)}\subseteq I^q$ hold for $q=p^e$ and for each integer $e\geqslant 1$. 
\end{proposition}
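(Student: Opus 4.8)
The plan is to factor the desired containment through the Frobenius power $I^{[q]}$, relying on the fact that in characteristic $p$ the sharp comparison between symbolic and ordinary powers is most naturally a statement about $I^{[q]}$, while the passage from $I^{[q]}$ to $I^q$ is essentially free. Concretely, I would establish the chain
\[
I^{(hq-h+1)} \subseteq I^{[q]} \subseteq I^q
\]
and read off the conclusion.

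The right-hand inclusion is immediate: $I^{[q]}$ is generated by the $q$-th powers $g^q$ of a generating set of $I$, and each such $g^q$ is a product of $q$ elements of $I$ and hence lies in $I^q$. This is precisely the observation that allows one to replace the a priori smaller Frobenius power by the ordinary power without weakening the symbolic exponent, and it is the only genuinely new ingredient needed beyond the discussion preceding the statement.

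The left-hand inclusion is the substantive input and carries the arithmetic I expect to be the main obstacle. When $q \geqslant h$ it follows at once from the containment $I^{(hq-q+1)}\subseteq I^{[q]}$ recorded above, since then $hq-h+1 \geqslant hq-q+1$ and therefore $I^{(hq-h+1)}\subseteq I^{(hq-q+1)}\subseteq I^{[q]}$. The delicate regime is that of small primes, $q=p^e < h$, where $hq-h+1 < hq-q+1$ and this chaining runs the wrong way. Here one must instead invoke the pigeonhole refinement in its sharpest form $I^{(h(q-1)+1)}\subseteq I^{[q]}$, noting $h(q-1)+1 = hq-h+1$; I would recover this by localizing at the associated primes of $I^{[q]}$, which in the regular ring $R$ coincide with those of $I$ (Frobenius being flat) and hence have height at most $h$, and then counting multiplicities of generators in a product of $h(q-1)+1$ factors drawn from $I$.

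Finally, I would note that the resulting bound strictly improves \Cref{thm:containment} at $n=q$ whenever $h \geqslant 2$: the theorem only yields $I^{(hq)} \subseteq I^q$, whereas the present statement relaxes the symbolic exponent from $hq$ to $hq-h+1$ for every prime-power $n=q$.
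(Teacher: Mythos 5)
Your overall route is the same as the paper's: the proposition is exactly the composition of Hochster--Huneke's pigeonhole containment $I^{(hq-h+1)} \subseteq I^{[q]}$ with the trivial inclusion $I^{[q]} \subseteq I^q$, and your treatment of the second inclusion and of the reduction via flatness of Frobenius is right. However, your case split at $q \geqslant h$ versus $q < h$ is both unnecessary and, in its first branch, unsound. The containment ``$I^{(hq-q+1)} \subseteq I^{[q]}$'' that you take as recorded in the preceding discussion is a typographical slip for $I^{(hq-h+1)} \subseteq I^{[q]}$: as literally written it is false. For $I = (x,y) \subseteq k[x,y]$ one has $h = 2$ and $hq-q+1 = q+1$, and $\m^{q+1} \not\subseteq (x^q,y^q)$ for $q \geqslant 3$ (e.g.\ $x^2y^2 \in \m^4$ but $x^2y^2 \notin (x^3,y^3)$). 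So your chain $I^{(hq-h+1)} \subseteq I^{(hq-q+1)} \subseteq I^{[q]}$ for $q \geqslant h$ passes through an ideal that need not lie in $I^{[q]}$. The repair is already in your own write-up: the sharp pigeonhole form $I^{(h(q-1)+1)} = I^{(hq-h+1)} \subseteq I^{[q]}$ is valid for every $q$ and is precisely the exponent in the statement, so it should be invoked uniformly, with no case distinction.

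One further point on the pigeonhole step itself: the counting cannot be done on ``factors drawn from $I$,'' since $I$ may require far more than $h$ generators, and pigeonhole on a generating set of $I$ only yields $I^{\mu(I)(q-1)+1} \subseteq I^{[q]}$. The correct bookkeeping is: for $P \in \Ass(R/I) = \Ass(R/I^{[q]})$, which is minimal over the radical ideal $I$, one has $IR_P = PR_P$, generated by a regular system of parameters of length $\het(P) \leqslant h$; pigeonhole on those $\leqslant h$ elements gives $I^{(hq-h+1)}R_P \subseteq (PR_P)^{hq-h+1} \subseteq (PR_P)^{[q]} = I^{[q]}R_P$, and intersecting the contractions over all such $P$ (legitimate because $I^{[q]}$ has no embedded primes, again by flatness of Frobenius) gives $I^{(hq-h+1)} \subseteq I^{[q]}$. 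With these two adjustments your argument is complete and coincides with the one the paper intends.
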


This leads to the question of whether similar improvements can be carried over to arbitrary characteristic and arbitrary exponents. Harbourne proposed this as a conjecture in \cite{Seshadri, HaHu} for homogeneous ideals, which we write here for radical ideals.

\begin{conjecture}[Harbourne] 
\label{conj:Harbourne}
Let $I$ be a radical homogeneous ideal in a polynomial ring, and let $h$ be the big height of $I$. Then the containments $I^{(hn-h+1)}\subseteq I^n$ hold for all $n\geqslant 1$.
\end{conjecture}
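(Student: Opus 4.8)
The plan is to build on the positive-characteristic Proposition stated just above the conjecture, which already gives the desired containment $I^{(hq-h+1)} \subseteq I^q$ in the single case where the exponent $n = q = p^e$ is a power of the characteristic. Two things then remain: to extend the containment from prime-power exponents to an arbitrary exponent $n$, and to pass from characteristic $p$ to an arbitrary polynomial ring, in particular one of characteristic $0$. The characteristic transfer is the routine half: fixing $n$, one spreads $I$ out over a finitely generated $\mathbb{Z}$-subalgebra, notes that the failure of the single containment $I^{(hn-h+1)} \subseteq I^n$ is an open condition on $\spec \mathbb{Z}$, and concludes that it suffices to establish this containment, for each fixed $n$ separately, in characteristic $p$ for infinitely many primes $p$.

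First I would attack the interpolation step, which is the heart of the matter. Given an arbitrary $n$, I would choose a large prime power $q = p^e$ and try to leverage the known $I^{(hq-h+1)} \subseteq I^{[q]} \subseteq I^q$ together with the graded-family inclusions $I^{(a)} I^{(b)} \subseteq I^{(a+b)}$ and the multiplicativity of Frobenius powers to factor $I^{(hn-h+1)}$ through the prime-power case. Concretely, one would hope to descend the Frobenius containment from exponent $q$ down to exponent $n$ using localization together with the pigeonhole argument that already upgrades $I^{(hq)} \subseteq I^{[q]}$ to the sharp $I^{(hq-h+1)} \subseteq I^{[q]}$, thereby trading the leading term $hn$ of \Cref{thm:containment} for the sharper $hn - h + 1$ at every finite exponent.

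The hard part --- and here candor is required --- is that this interpolation cannot be made to work, because the conjecture is false as stated. Frobenius and asymptotic methods are genuinely only sensitive to the leading constant $h$ governing \Cref{thm:containment}, and they do not detect the additive correction $-h+1$ at finite exponents. The obstruction is realized concretely by the Fermat-type (dual Hesse) configurations discussed elsewhere in this survey (cf.\ \Cref{Fermat}): for the ideal $I$ of the twelve points of the dual Hesse configuration in $\mathbb{P}^2$ one has big height $h = 2$, yet the $n = 2$ instance of the conjecture, namely $I^{(3)} \subseteq I^2$, already fails over $\mathbb{C}$. Thus the correct outcome of this line of attack is not a proof but the precise localization of where the positive-characteristic heuristic breaks down, together with the realization that one should hunt for counterexamples among highly symmetric point configurations --- which is exactly how the conjecture was disproved.
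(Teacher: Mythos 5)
Your conclusion is the correct one and it matches the paper's own treatment exactly: the statement is presented there as a \emph{conjecture}, not a theorem, and the paper itself refutes its full generality via the Fermat-type configurations of \Cref{Fermat} (the smallest member of that family being the twelve points of the dual Hesse configuration in $\P^2$, for which $h=2$ yet $I^{(3)}\not\subseteq I^2$ over any field containing three distinct cube roots of unity). Your diagnosis --- that the positive-characteristic containment $I^{(hq-h+1)}\subseteq I^{[q]}$ is special to prime-power exponents and cannot be interpolated down to arbitrary $n$ --- is consistent with the paper's discussion, which records that the conjecture nonetheless holds for monomial ideals, ideals defining F-pure rings, general points in $\P^2$ and $\P^3$, and matroid configurations, and that no prime counterexamples are known.
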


\begin{remark}
To compare \Cref{conj:Harbourne} to \Cref{thm:containment}, it is instructive to note that \Cref{thm:containment} implies that $I^{(n)}\subseteq I^{\lfloor \frac{n}{h}\rfloor}$ for $n\geqslant 1$, while Harbourne's \Cref{conj:Harbourne} asks if $I^{(n)} \subseteq I^{\lceil \frac{n}{h}\rceil}$ for all $n\geqslant 1$.
\end{remark}

There are various cases where \Cref{conj:Harbourne} is known to hold: if $I$ is a monomial ideal \cite[Example 8.4.5]{Seshadri} or more generally if $I$ defines an F-pure ring \cite{GrifoHuneke}, if $I$ corresponds to a
general set of points in $\P^2$ \cite{BoH} or $\P^3$ \cite{Dumnicki2015}, and if $I$ defines a matroid configuration \cite{matroid}, that is, a union of codimension $c$ intersections of hypersurfaces such that any subset of at most $c+1$ of the equations of these hypersurfaces forms a regular sequence. Moreover, versions of \Cref{conj:Harbourne} hold in some singular settings as well \cite{GrifoMaSchwede}. Despite that, asking that \Cref{conj:Harbourne} holds for any radical ideal in a regular ring turns out to be too general.

\begin{example}[Dumnicki--Szemberg--Tutaj-Gasi\'nska \cite{counterexamples},  Harbourne--Seceleanu \cite{HaSeFermat}]
\label{Fermat}
 Let $n\geqslant 3$ be an integer and let $k$  be a field of with $\Char(k)\neq 2$  that contains $n$ distinct roots of unity. Let $R = k[x, y, z]$, and consider the ideal
\[I =(x(y^n-z^n),y(z^n-x^n),z(x^n-y^n))\]
defining $n^2+3$ points in $\P_k^2$, namely the 3 coordinates points together with the $n^2$ points defined by the complete intersection $(x^n-y^n, y^n-z^n)$. For this ideal $h=2$ but $I^{(3)}\not\subseteq I^2$, thus \Cref{conj:Harbourne} is not satisfied for $n=2$.
\end{example}

\Cref{Fermat} is particularly interesting because it shows that \Cref{conj:Harbourne} can fail even for ideals with noetherian symbolic Rees algebra. Indeed, in \cite{NagelSeceleanu} it is shown that the symbolic Rees algebras of the ideals in \Cref{Fermat} are finitely generated. On the other hand, the family of space monomial curves in \Cref{ex:monomialcurve}, which have (big) height 2, satisfy $I^{(4)}\subseteq I^3$ by \cite[Example 4.7]{GrifoStable}. This is a stronger containment than the one proposed by \Cref{conj:Harbourne}, and yet these ideals have non-noetherian symbolic Rees algebras.

  In \cite{HaSeFermat}, Harbourne and Seceleanu show that the containment $I^{(hn-h+1)} \subseteq I^n$ can fail for arbitrarily high values of $n$ that grow with the dimension of $R$, if $R$ is a polynomial ring of characteristic $p > 0$. However, in characteristic 0 all the known counterexamples to  \Cref{conj:Harbourne} found to date \cite{counterexamples, RealsCounterexample, ResurgenceKleinWiman, MalaraSzpond, NewCounterexample, DrabkinSeceleanu} are for the value $n=2$.
There are moreover no prime counterexamples to Harbourne's  \Cref{conj:Harbourne}. We emphasize this by asking:

\begin{question}[Harbourne conjecture for primes]
If $P$ is a prime ideal in a regular ring and $\het(P)=h$, then do the containments $P^{(hn-h+1)}\subseteq P^n$ hold for all $n\geqslant 1$?
\end{question}

For example, in characteristic other than $3$, it is know that all space monomial curves $(t^a, t^b, t^c)$ satisfy this containment for $n=2$ \cite[Theorem 4.1]{GrifoStable}, and also for $n \gg 0$ \cite[Corollary 4.3]{GHM}. There are also no known counterexamples to the following asymptotic version of Harbourne's conjecture formulated in \cite{GrifoStable}.

\begin{conjecture}[Stable Harbourne conjecture]
\label{conj:stableHarbourne}
Let $R$ be a regular ring and $I$ a radical ideal of $R$ with big height $h$. Then there exists $N>0$ such that the  containment $I^{(hn-h+1)}\subseteq I^n$ holds for all $n\geqslant N$.
\end{conjecture}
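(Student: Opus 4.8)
The plan is to reduce the conjecture to a single strict inequality between two invariants of $I$ and then to concentrate all the difficulty there. Introduce the \emph{asymptotic resurgence}
$$\hat{\rho}(I) \colonequals \sup\left\{ \frac{a}{b} : I^{(a)} \not\subseteq \overline{I^{\,b}} \right\},$$
where $\overline{(\,\cdot\,)}$ denotes integral closure (by a theorem of DiPasquale--Francisco--Mermin--Schweig this coincides with the more classical definition phrased via $I^{(at)}\not\subseteq I^{bt}$ for $t \gg 0$). Two ingredients are available for free. First, \Cref{thm:containment} gives $I^{(hn)} \subseteq I^n$ for all $n$, so $\hat{\rho}(I) \leqslant h$. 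Second, the uniform Brian\c{c}on--Skoda theorem in a regular ring of dimension $d$ supplies the containment $\overline{I^{\,m+d-1}} \subseteq I^m$ for every ideal and every $m \geqslant 1$, trading integral closures for genuine powers at the fixed cost of $d-1$ in the exponent. One may assume $d = \dim R < \infty$, localizing at the finitely many asymptotic primes of $I$ if needed, so that both $d$ and the constant produced below are uniform.

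Granting the strict inequality $\hat{\rho}(I) < h$, the conjecture drops out of a one-line estimate. Put $a = hn - h + 1$ and $b = n + d - 1$. Since
$$\lim_{n \to \infty} \frac{hn - h + 1}{n + d - 1} = h > \hat{\rho}(I),$$
there is an $N$ with $a/b > \hat{\rho}(I)$ for all $n \geqslant N$. For such $n$ the definition of $\hat{\rho}(I)$ gives $I^{(hn-h+1)} \subseteq \overline{I^{\,n+d-1}}$, and Brian\c{c}on--Skoda gives $\overline{I^{\,n+d-1}} \subseteq I^n$; composing yields $I^{(hn-h+1)} \subseteq I^n$ for all $n \geqslant N$, which is exactly the statement. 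The threshold $N$ depends only on $d$, $h$, and the single number $\hat{\rho}(I)$, which matches the phenomenon, observed by Harbourne and Seceleanu, that the number of failures of the containment grows with the dimension of the ambient ring.

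The entire content of the conjecture is thus the strict inequality $\hat{\rho}(I) < h$ (the property that $\hat{\rho}(I)<h$ is sometimes called having \emph{expected resurgence}); the weak inequality $\hat{\rho}(I) \leqslant h$ is automatic, and since $\hat{\rho}(I) = h$ would produce non-containments $I^{(a)} \not\subseteq I^b$ with $a/b$ approaching $h$, the strict inequality is essentially forced as well. When $\R_s(I)$ is noetherian I expect this to be accessible: \Cref{noetherian equivalences} yields $k$ with $I^{(kn)} = (I^{(k)})^n$, so all symbolic powers are governed by ordinary powers of the single ideal $I^{(k)}$, and $\hat{\rho}(I)$ should then be computable from the finitely many Rees valuations of $I^{(k)}$, reducing the strict inequality to a finite valuative check. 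The main obstacle is the non-noetherian case --- which is precisely where the conjecture is most interesting, as for the space monomial curves of \Cref{ex:monomialcurve} --- because there is no finite generating set to organize the valuations. Here the strategy I would pursue is singularity-theoretic: in characteristic $p$ the Frobenius containments $I^{(hq-h+1)} \subseteq I^{[q]}$ underlying \Cref{thm:containment}, together with test-ideal estimates carrying a bounded additive error, should force $\hat{\rho}(I)$ strictly below $h$ when $R/I$ has mild $F$-singularities, after which one would descend to characteristic $0$ by reduction modulo $p$. Removing the hypothesis on the singularities of $R/I$, and thereby making $\hat{\rho}(I) < h$ unconditional, is the crux that leaves the conjecture open.
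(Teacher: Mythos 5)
You are addressing a conjecture; the paper offers no proof of it, and your proposal does not supply one either. What you have actually done is (re)derive the reduction that the paper already records as the theorem of Grifo--Huneke--Mukundan \cite[Proposition 2.11]{GHM}: if $\widehat{\rho}(I) < \bight(I)$ (``expected resurgence''), then $I^{(hn-h+1)} \subseteq I^n$ for all $n \gg 0$. Your route to that reduction --- the integral-closure description of $\widehat{\rho}(I)$ combined with the uniform Brian\c{c}on--Skoda theorem --- is essentially the standard one, and it is valid under the hypotheses where those two ingredients are available (a regular local ring containing a field, or a polynomial ring over a field), which is exactly why the quoted theorem carries those hypotheses rather than ``regular ring.'' So the reduction is correct but not new relative to the paper, and it is not where the difficulty lies.

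The genuine gap is your assertion that the strict inequality $\widehat{\rho}(I) < h$ is ``essentially forced'' because $\widehat{\rho}(I) = h$ would produce non-containments with ratios approaching $h$. That is not a contradiction: a family of failures such as $I^{(hn-1)} \not\subseteq I^n$ for all $n$ would give ratios $(hn-1)/n \to h$ while remaining perfectly consistent with the containment $I^{(hn)} \subseteq I^n$ of \Cref{thm:containment}; a supremum can equal $h$ without being attained by any single pair. Whether $\widehat{\rho}(I) < \bight(I)$ holds for every radical ideal is precisely the open problem --- this is why the paper introduces the terminology ``ideals with expected resurgence'' and lists the special classes for which it is known. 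Your closing paragraph concedes this, but that concession contradicts the earlier claim and means the argument terminates in an unproved hypothesis rather than a proof. Even in the noetherian case the Drabkin--Guerrieri and DiPasquale--Drabkin formulas make $\widehat{\rho}(I)$ a maximum of finitely many valuative ratios, but nothing forces that maximum strictly below $h$; and \Cref{Fermat} shows that noetherianity of $\sra(I)$ coexists with failures of the non-stable containment, so the finite check is a genuine computation, not a formality. In short: your proposal is a correct proof of the known implication ``expected resurgence implies stable Harbourne,'' not a proof of the conjecture.
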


This stable version of Harbourne's Conjecture does hold for various classes of ideals in equicharacteristic rings, including examples with non-noetherian symbolic Rees algebra. In Subsection \ref{subsection asymptotic}, we will discuss ideals with expected resurgence, and all of these satisfy the Stable Harbourne Conjecture.

\subsection{Noetherian symbolic Rees algebras and the containment problem}
We now consider the implications of having a noetherian symbolic Rees algebra on the Containment Problem \ref{containmentproblem}. The first easy implication is that $I$ satisfies a version of Harbourne's Conjecture with the big height replaced by the generation type.

\begin{lemma}\label{harbourne d}
	Let $R$ be a noetherian ring and $I$ an ideal in $R$ with $\gt(I) = d$. Then for all $n \geqslant 1$,
	$$I^{(dn-d+1)} \subseteq I^n.$$
	In particular, $I$ satisfies Harbourne's Conjecture whenever $\gt(I) \leqslant \bight(I)$.
\end{lemma}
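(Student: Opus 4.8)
The plan is to read off the containment directly from the generating-degree description of the symbolic Rees algebra. Since $\gt(I)=d$ is finite, $\R_s(I)$ is noetherian and generated as an $R$-algebra in degrees at most $d$, which is exactly part $(3)$ of \Cref{noetherian equivalences} (available for any noetherian $R$, with no excellence needed, since these conditions are unconditionally equivalent). Thus for every $m\geqslant 1$,
$$I^{(m)} = \sum_{a_1 + 2a_2 + \cdots + d a_d = m} I^{a_1}\left(I^{(2)}\right)^{a_2}\cdots\left(I^{(d)}\right)^{a_d}.$$
Specializing to $m = dn-d+1$, it suffices to show that each individual summand is contained in $I^n$.

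I would bound a typical summand using the nesting of symbolic powers, $I^{(j)}\subseteq I^{(1)}=I$ for all $j\geqslant 1$, which gives
$$I^{a_1}\left(I^{(2)}\right)^{a_2}\cdots\left(I^{(d)}\right)^{a_d}\subseteq I^{a_1+a_2+\cdots+a_d}.$$
The problem then reduces to the elementary counting claim that $a_1+\cdots+a_d\geqslant n$ whenever $\sum_j j a_j = dn-d+1$. This is a weighted pigeonhole estimate: every index $j$ that occurs satisfies $j\leqslant d$, so $dn-d+1=\sum_j j a_j\leqslant d\sum_j a_j$, whence $\sum_j a_j\geqslant (dn-d+1)/d = n-1+\tfrac1d$; as the left-hand side is an integer and $\tfrac1d>0$, this forces $\sum_j a_j\geqslant n$. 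Combining with the monotonicity $I^{a_1+\cdots+a_d}\subseteq I^n$, every summand lies in $I^n$, so $I^{(dn-d+1)}\subseteq I^n$.

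For the final assertion I would simply note that if $d=\gt(I)\leqslant\bight(I)=h$, then $hn-h+1\geqslant dn-d+1$ for all $n\geqslant 1$, so nesting of symbolic powers yields $I^{(hn-h+1)}\subseteq I^{(dn-d+1)}\subseteq I^n$, which is precisely Harbourne's containment. I do not expect a genuine obstacle: the argument is just monotonicity plus a boundary-case pigeonhole count. The only points that warrant care are verifying that finiteness of $\gt(I)$ legitimately places us in the equivalent conditions of \Cref{noetherian equivalences} so that part $(3)$ applies, and checking the counting at the boundary, where the $+\tfrac1d$ term is exactly what makes the ceiling round up to $n$ (and degenerates correctly to the trivial identity $I^{(n)}=I^n$ when $d=1$).
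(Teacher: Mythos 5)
Your proof is correct and follows essentially the same route as the paper's: decompose $I^{(dn-d+1)}$ via condition $(3)$ of \Cref{noetherian equivalences}, bound each summand by $I^{a_1+\cdots+a_d}$ using $I^{(i)}\subseteq I$, and use integrality of $a_1+\cdots+a_d$ to round $(dn-d+1)/d = n-1+\tfrac1d$ up to $n$. Your side remarks --- that excellence is not needed because $(1)$--$(3)$ are unconditionally equivalent, and that the ``in particular'' clause follows from $hn-h+1\geqslant dn-d+1$ together with the nesting of symbolic powers --- are both accurate.
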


\begin{proof}
	Fix $n \geqslant 1$. By Lemma \ref{noetherian equivalences}, it is enough to show that for all choices of $a_1, \ldots, a_n \geqslant 0$ such that $a_1 + 2a_2 + 3a_3 + \cdots + d a_d = dn-d+1$,
	$$I^{a_1} \left( I^{(2)} \right)^{a_2} \cdots \left( I^{(d)} \right)^{a_d} \subseteq I^{dn-d+1}.$$
	To see this holds, note that $\left( I^{(i)} \right)^{a_i} \subseteq I^{a_i}$ for each $i$, so that
	$$I^{a_1} \left( I^{(2)} \right)^{a_2} \cdots \left( I^{(d)} \right)^{a_d} \subseteq I^{a_1 + a_2 + \cdots + a_d}.$$
	For each such choice of $a_1, \ldots, a_d$,
	$$d \left( a_1 + \cdots + a_d \right) \geqslant a_1 + 2a_2 + \cdots + d a_d = dn-d+1,$$
	so that
	$$a_1 + \cdots + a_d \geqslant \frac{d(n-1)+1}{d}.$$
	Since $a_1 + \cdots + a_d$ is an integer, we conclude that 
	$$a_1 + \cdots + a_d \geqslant (n-1)+1 = n.\qedhere$$
\end{proof}

Moreover, if $\sra(I)$ is noetherian, it suffices to check the containments for $n \leqslant \gt(I)$ in \Cref{conj:Harbourne} to conclude Harbourne's Conjecture holds for $I$:

\begin{lemma}\label{Harbourne up to d implies Harbourne always}
	Let $R$ be a noetherian ring and $I$ an ideal in $R$ such that $\gt(I) = d$. If $h$ is an integer such that
	$$I^{(i)} \subseteq I^{\left\lceil \frac{i}{h} \right\rceil}$$
	for all $i \leqslant d$, then for all $n \geqslant 1$,
	$$I^{(hn-h+1)} \subseteq I^n.$$
\end{lemma}

\begin{proof}
	Given $n \geqslant 1$,
	$$I^{(hn-h+1)} \quad = \sum_{a_1 + 2a_2 + \cdots + d a_d = hn-h+1} I^{a_1} \left( I^{(2)} \right)^{a_2} \cdots \left( I^{(d)} \right)^{a_d}.$$
	It is enough to show that for all $a_1, \ldots, a_d \geqslant 0$ such that $a_1 + 2 a_2 + \cdots + d a_d = hn-h+1$, the ideal
	$$J \colonequals I^{a_1} \left( I^{(2)} \right)^{a_2} \cdots \left( I^{(d)} \right)^{a_d}$$
	is contained in $I^n$. By assumption, $I^{(i)} \subseteq I^{\lceil \frac{i}{h} \rceil}$ for each $i$. Therefore, $J \subseteq I^N$, where
	$$N \geqslant \sum_{i=1}^d a_i \left\lceil \frac{i}{h} \right\rceil \geqslant \sum_{i=1}^d \frac{i a_i}{h} = \frac{hn-h+1}{h}.$$
	Since $N$ is an integer, we must have
	$$N \geqslant \left\lceil \frac{hn-h+1}{h} \right\rceil = n.\qedhere$$
\end{proof}

For example, as a consequence of \Cref{Harbourne up to d implies Harbourne always} and \cite[Theorem 4.4]{GrifoStable}, Harbourne's Conjecture holds for space monomial curves of generation type up to $6$.

In a similar vein, one may ask if the Stable Harbourne Conjecture holds when $\sra(I)$ is noetherian. Here is some evidence in that direction (cf. \cite[Theorem 5.28]{mythesis}).

\begin{theorem}
	Let $I$ be a radical ideal of big height $h$ in a regular ring $R$ containing a field. If $\svd(I)$ divides $h$, then $I^{(hn-h+1)} \subseteq I^n$ for all $n \gg 0$.
\end{theorem}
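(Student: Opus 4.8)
The plan is to combine the exact Veronese structure supplied by $\svd(I)$ with the sharp containment available for Frobenius (prime-power) exponents in positive characteristic. Set $k = \svd(I)$; since $\svd(I)$ divides $h$ this value is finite, so $\sra(I)$ is noetherian by \Cref{noetherian equivalences}, and we may write $h = ks$ with $s \geqslant 1$. The case $k=1$ is immediate, since then $I^{(m)} = I^m$ for all $m$ and $hn-h+1 \geqslant n$ gives the containment for every $n$; so assume $k \geqslant 2$. Because $I^{(km)} = (I^{(k)})^m$ for all $m$, \Cref{lemma k equivalence} provides a constant $A$, and from it I would first extract a \emph{peeling inequality}: for every $b \geqslant 0$ and all $j \geqslant A$, $m \geqslant j$,
$$I^{(b+km)} \subseteq (I^{(k)})^{m-j}\, I^{(b+kj)}.$$
This follows by writing $b = u_0 k + r_0$, applying \Cref{lemma k equivalence} to $b+km$, factoring $(I^{(k)})^{m-j}$ out of each of the finitely many summands (legitimate since $a \leqslant A \leqslant j$), and collapsing the remaining factor with the graded-family inclusion $(I^{(k)})^{u_0+j-a} I^{(ak+r_0)} \subseteq I^{(b+kj)}$.

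The heart of the argument is a splitting of the target symbolic power at a Frobenius exponent. Working first in characteristic $p>0$, fix $N \gg 0$ and choose a power $q = p^e$ of $p$ with $A+1 \leqslant q \leqslant N$; such a $q$ exists once $N \geqslant p(A+1)$. Applying the peeling inequality with $b=1$, $m = s(N-1)$, and $j = s(q-1)$ --- noting that $hN-h+1 = 1 + k\,s(N-1)$ and $hq-h+1 = 1 + k\,s(q-1)$ --- yields
$$I^{(hN-h+1)} \subseteq (I^{(k)})^{s(N-q)}\, I^{(hq-h+1)} = I^{(h(N-q))}\, I^{(hq-h+1)},$$
where the last equality is the Veronese identity $(I^{(k)})^{s(N-q)} = I^{(ks(N-q))}$. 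Now I would bound the two factors by the two containment results at our disposal: \Cref{thm:containment} gives $I^{(h(N-q))} \subseteq I^{N-q}$, while the refined positive-characteristic containment recorded in the Proposition above, applied at the prime power $q$, gives $I^{(hq-h+1)} \subseteq I^{q}$. Multiplying, $I^{(hN-h+1)} \subseteq I^{N-q} I^{q} = I^{N}$, as desired. The role of the hypothesis that $\svd(I)$ divides $h$ is precisely to make $h(N-q)$ a multiple of $k$ and to force the residue of $hq-h+1$ modulo $k$ to equal $1$, so that both factors land exactly on the Veronese lattice and on a Frobenius remainder.

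For $R$ of equicharacteristic zero I would pass to positive characteristic by the standard spreading-out and reduction-mod-$p$ technique: descend $R$, $I$, and the finitely many data realizing $\svd(I)=k$ and the constant $A$ to a finitely generated $\mathbb{Z}$-algebra, then reduce modulo $p$. For a fixed large $N$, the containment $I^{(hN-h+1)} \subseteq I^N$ amounts to the vanishing of a single finitely generated module, so it may be checked after reduction modulo one suitable prime $p$; I would choose $p$ in a window $[p_0, \sqrt{N}\,]$, taking $p_0$ large enough that reduction preserves radicality, the big height, the equalities $(I_p^{(k)})^m = I_p^{(km)}$, the formation of symbolic powers, and the constant $A$, and small enough ($p \leqslant \sqrt{N}$, hence $p^2 \leqslant N$) that a power $q=p^e \in [A+1,N]$ is available for the characteristic-$p$ argument. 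Running the previous paragraph in the reduction $R_p$ then forces the reduced containment, hence the original one. The main obstacle is exactly this reduction step: one must check that forming symbolic powers and the $\svd$-Veronese structure commute with reduction modulo $p$ for all large $p$ --- stability of the associated primes and of the finite module presentation of $\sra(I)$ over its $k$-th Veronese subalgebra --- and that the two demands on $p$, namely large enough for good reduction yet small enough that Frobenius powers are dense below $N$, can be met simultaneously, which they can once $N$ is large.
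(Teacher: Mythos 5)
Your positive-characteristic argument is correct and is essentially the paper's: both rest on the decomposition of $I^{(hn-h+1)}$ supplied by \Cref{lemma k equivalence} (the paper applies that lemma directly with $k=h$, using that $\svd(I)\mid h$ forces $I^{(hm)}=(I^{(h)})^m$; your peeling inequality repackages the same computation at $k=\svd(I)$), followed by splitting off a factor $I^{(hq-h+1)}\subseteq I^{q}$ at a Frobenius power $q=p^e\geqslant A+1$ and absorbing the remainder with $I^{(h(n-q))}\subseteq I^{n-q}$ from \Cref{thm:containment}.

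The equicharacteristic-zero half is where you genuinely diverge, and it contains a gap that you name but do not close. To transfer the containment $I^{(hN-h+1)}\subseteq I^N$ from a reduction $R_p$ back to characteristic zero, the prime $p$ must avoid a finite bad set that depends on $N$: by generic freeness the module $(I^{(hN-h+1)}+I^N)/I^N$ over the descended $\mathbb{Z}$-algebra becomes $\mathbb{Z}_c$-free only after inverting some integer $c=c(N)$, and only for $p\nmid c(N)$ does vanishing modulo $p$ imply vanishing over $\mathbb{Q}$. There is no a priori bound on the prime divisors of $c(N)$, so nothing guarantees a usable prime in the window $[\,p_0,\sqrt{N}\,]$, or even in $[\,p_0,N\,]$: the requirement that $p$ be large enough for faithful reduction grows with $N$ in an uncontrolled way, while the requirement that a Frobenius power fit below $N$ caps $p$ from above. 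Your closing assertion that the two demands ``can be met simultaneously \ldots once $N$ is large'' is precisely the unproved step; a secondary unresolved point is the uniformity in $m$ of $(I^{(m)})_p=(I_p)^{(m)}$ and of the constant $A$ under reduction. The paper sidesteps reduction modulo $p$ entirely in characteristic zero: it invokes H\"ubl's theorem \cite{Hubl_monomial}, which provides $N$ with $(I^{(2)})^n\subseteq I^{n+1}$ for all $n\geqslant N$, and inserts the factor $(I^{(h)})^{N}\subseteq (I^{(2)})^{N}\subseteq I^{N+1}$ into the same decomposition to absorb the ``$+1$''; combined with $I^{(ha+1)}\subseteq I^{a}$ from \cite{ELS,comparison} this gives $I^{(hn-h+1)}\subseteq I^n$ for all $n\geqslant N+A+1$. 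To salvage your route you would need a good-reduction statement uniform in $N$ for the entire family of containments, which is substantially harder than the theorem itself.
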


\begin{proof}
	First, notice there is nothing to show in the case when $h=1$, so we assume $h \geqslant 2$.
	By Lemma \ref{lemma k equivalence}, there exists an integer $A \geqslant 1$ such that for all $n \geqslant 1$,
	$$I^{(hn-h+1)} = \sum_{a=0}^A \left( I^{(h)} \right)^{n-1-a} I^{(ha+1)}.$$
	In prime characteristic $p$, consider $e$ such that $q \colonequals p^e \geqslant A + 1$. Whenever $n \geqslant q$, we have $hn-h+1 \geqslant hq-h+1 \geqslant hA+1$, and thus
	$$I^{(hn-h+1)} = \sum_{a=0}^A \left( I^{(h)} \right)^{n-1-a} I^{(ha+1)} = \sum_{a=0}^A \left( I^{(h)} \right)^{n-q} \left( I^{(h)} \right)^{q-1-a} I^{(ha+1)} \subseteq I^{(h(n-q))} I^{(hq-h+1)}.$$
	Now as we have mentioned above, $I^{(hq-h+1)} \subseteq I^{q}$ and $I^{(h(n-q))} \subseteq I^{n-q}$ by \cite{comparison}; the latter is true more generally, but the first statement requires specifically that we are in characteristic $p$ and $q = p^e$. Combining these two containments with the line above, we conclude that
	$$I^{(hn-h+1)} \subseteq I^{(hq-h+1)} I^{(h(n-q))} \subseteq I^{q} I^{n-q} = I^n.$$

	To prove the statement in equicharacteristic $0$, we need \cite[Theorem 1.2]{Hubl_monomial}, which says that there exists $N > 0$ such that $\left( I^{(2)} \right)^n \subseteq I^{n+1}$ for all $n \geqslant N$. Fix such $N$, and let $n \geqslant N + A + 1$. Then
	$$I^{(hn-h+1)} = \sum_{a=0}^A \left( I^{(h)} \right)^{n-1-a} I^{(ha+1)} = \sum_{a=0}^A \left( I^{(h)} \right)^{N} \left( I^{(h)} \right)^{n-1-N-a} I^{(ha+1)}.$$
	By \cite{ELS,comparison}, $I^{(ha+1)} \subseteq I^{a}$. Moreover, $\left( I^{(h)} \right)^{n-1-N-a} \subseteq I^{n-1-N-a}$ since $I^{(h)} \subseteq I$. By choice of $N$, $\left( I^{(h)} \right)^N \subseteq \left( I^{(2)} \right)^N \subseteq I^{N+1}$. Therefore,
	$$I^{(hn-h+1)} = \sum_{a=0}^A \left( I^{(h)} \right)^{N} \left( I^{(h)} \right)^{n-1-N-a} I^{(ha+1)} \subseteq \sum_{a=0}^A I^{N+1} I^{n-1-N-a} I^{a} = I^n.\qedhere$$
\end{proof}

So if the big height of an ideal $I$ is divisible by its standard Veronese degree, then $I$ satisfies the stable Harbourne \Cref{conj:stableHarbourne}. The ideals in \Cref{prop:symmshifted}, for example, do not have this property although they satisfy \Cref{conj:stableHarbourne}. Thus we ask:

\begin{question}
Which ideals satisfy the condition that $\bight(I)$ is divisible by $\svd(I)$?
\end{question}

In prime characteristic, there are other cases where the noetherianity of $\sra(I)$ implies the Stable Harbourne Conjecture \ref{conj:stableHarbourne}; for example, see \cite[Theorem 5.19 and Theorem 5.23]{mythesis}.

\subsection{Asymptotic invariants}\label{subsection asymptotic}

One way to study symbolic powers and the containment problem is through the development of asymptotic invariants. This is an idea pioneered by Bocci and Harbourne in \cite{BoH, resurgence2} with the definition of the resurgence of an ideal, and extended in \cite{asymptoticResurgence} with the definition of the asymptotic resurgence. We present these invariants and their relationship to the symbolic Rees algebra below.

\begin{definition}
 The {\em resurgence} of  an ideal $I$ and the {\em asymptotic resurgence} are given, respectively,  by 
 \[
 \rho(I)=\sup\left \{\frac{a}{b} \mid I^{(a)}\not\subseteq I^b \right \} \text{ and } \widehat{\rho}(I)=\sup\left \{\frac{a}{b} \mid I^{(at)}\not\subseteq I^{bt} \text{ for }t\gg0 \right \}. 
 \]
 \end{definition}

The importance of (asymptotic) resurgence to containment problems lies in the fact that, by definition, if $a,b$ are positive integers with $a > \rho(I)b$, then $I^{(a)} \subseteq I^b$. 

If $I$ is an ideal of a regular ring and has big height $h$, \Cref{thm:containment} implies that $1\leqslant \rho (I) \leqslant h$ and since the definitions yield $\widehat{\rho}(I)\leqslant \rho(I)$ we deduce that $1\leqslant \widehat{\rho} (I) \leqslant h$ as well.
If we have equality of ordinary and symbolic powers $I^{(n)} = I^n$ for all $n \geqslant 1$, then $\widehat{\rho}(I)=\rho(I) = 1$. However, the resurgence attaining its lowest possible value of 1 does not guarantee equality of  the ordinary and symbolic powers. 

\begin{example}[DiPasquale--Drabkin \cite{DiPasqualeDrabkin}]
The ideal $I=(abc, aef, cde, bdf)$ of the polynomial ring $R=k[a,b,c,d,e,f]$ satisfies $\rho(I)=1$ and $I^{(n)} = I^n +(abcdef)I^{n-2}$ for $n\geqslant 2$. In particular, the ordinary and symbolic powers do not coincide for any $n \geqslant 2$.
\end{example}

At the other end of the spectrum, whether the (asymptotic) resurgence attains its largest possible value equal to the big height has implications on the stable Harbourne \Cref{conj:stableHarbourne}. First, it follows easily from the definition that \Cref{conj:stableHarbourne} holds for ideals with $\rho(I) < \bight(I)$ (see \cite[Remark 2.7]{GrifoStable}); moreover, it is sufficient to show that $\widehat\rho(I) < \bight(I)$.

\begin{theorem}[Grifo--Huneke--Mukundan {\cite[Proposition 2.11]{GHM}}]
 Let $I$ be a radical ideal in either a regular local ring containing a field, or a quasi-homogeneous radical ideal in a polynomial ring over a field. If $\widehat{\rho}(I)< \bight(I)$, then the containment $I^{(hn-h+1)}\subseteq I^n$ holds for all $n\gg 0$.
\end{theorem}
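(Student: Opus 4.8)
The plan is to exploit the strict gap between $\widehat{\rho}(I)$ and $h=\bight(I)$, together with the elementary observation that the ratio of exponents in the desired containment, $\tfrac{hn-h+1}{n}=h-\tfrac{h-1}{n}$, increases to $h$ as $n\to\infty$. Since $\widehat{\rho}(I)<h$, I would first fix a rational number $\tfrac{c}{d}$ with $\widehat{\rho}(I)<\tfrac{c}{d}<h$. The whole argument then reduces to producing containments of ``slope'' $\tfrac{c}{d}$ into honest powers of $I$ that are valid for every large multiplier $t$, and assembling them via the trivial monotonicity rules $I^{(s)}\subseteq I^{(s')}$ for $s\geqslant s'$ and $I^{m}\subseteq I^{n}$ for $m\geqslant n$.

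The core combinatorial step, once such containments are available, is quick. Suppose we know $I^{(ct)}\subseteq I^{\,dt-e}$ for all $t\gg 0$, where $e$ is a fixed constant (a bounded ``loss'' explained below). Given $n\gg 0$, choose $t=\lceil (n+e)/d\rceil$, so that $dt-e\geqslant n$ and hence $I^{\,dt-e}\subseteq I^{n}$; note $t\to\infty$ with $n$, so the hypothesis $t\gg 0$ is automatic. It then remains only to verify $hn-h+1\geqslant ct$, for then $I^{(hn-h+1)}\subseteq I^{(ct)}\subseteq I^{\,dt-e}\subseteq I^{n}$. Since $t\leqslant (n+e)/d+1$, we have $ct\leqslant \tfrac{c}{d}n+\tfrac{c}{d}e+c$, and because $\tfrac{c}{d}<h$ the quantity $hn-h+1-ct\geqslant \bigl(h-\tfrac{c}{d}\bigr)n-\bigl(h-1+\tfrac{c}{d}e+c\bigr)$ is positive once $n$ is large. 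Thus the gap $\bigl(h-\tfrac{c}{d}\bigr)n\to\infty$ comfortably absorbs both the additive constant $h-1$ and the loss $e$, which is exactly why the sharp shape $hn-h+1$, rather than merely $hn$, comes essentially for free.

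The main obstacle is therefore to justify the input $I^{(ct)}\subseteq I^{\,dt-e}$ for all $t\gg 0$ with $e$ bounded. The definition of $\widehat{\rho}(I)$ gives, for $\tfrac{c}{d}>\widehat{\rho}(I)$, only that $I^{(ct)}\subseteq I^{dt}$ for \emph{infinitely many} $t$, which is too weak for the previous paragraph, where one valid $t$ is needed for \emph{each} $n$. To upgrade ``infinitely many $t$'' to ``all large $t$'' I would pass through integral closures: the asymptotic resurgence is unchanged if ordinary powers are replaced by their integral closures, and in that formulation (due to DiPasquale, Francisco, Mermin, and Schweig) no limit over $t$ is needed at all, so $\tfrac{c}{d}>\widehat{\rho}(I)$ yields the robust containment $I^{(ct)}\subseteq \overline{I^{dt}}$ for every $t\geqslant 1$. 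To return from integral closures to honest powers I would then invoke the Brian\c{c}on--Skoda theorem in the regular ring $R$, which gives $\overline{I^{dt}}\subseteq I^{\,dt-\ell+1}$ for a fixed $\ell\leqslant \dim R$; taking $e=\ell-1$ supplies the missing input. This last step is precisely where regularity of $R$ (and the quasi-homogeneous, hence again regular, case) is used. The delicate point to check carefully is the uniformity in $t$ of the integral-closure comparison: this uniformity, not the final bookkeeping, is the genuine content of the theorem.
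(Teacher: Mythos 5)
Your argument is correct, and it follows essentially the same route as the proof in the cited source \cite[Proposition 2.11]{GHM} (the survey itself only quotes the statement): replace $\widehat{\rho}$ by its integral-closure characterization due to DiPasquale--Francisco--Mermin--Schweig to get $I^{(ct)}\subseteq \overline{I^{dt}}$ for \emph{all} $t$, descend to honest powers via Brian\c{c}on--Skoda in the regular ambient ring, and absorb the bounded loss using the gap $h-\tfrac{c}{d}>0$. You correctly identified the one genuine subtlety --- that the raw definition of $\widehat{\rho}$ only gives containments for infinitely many $t$ --- and resolved it the same way the original proof does.
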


Ideals satisfying $\rho(I) < \bight(I)$ have been termed {\em ideals with expected resurgence} in \cite{GHM}. Classes of ideals with expected resurgence include: those defining general points in $\mathbb{P}^n$ \cite[Theorem 4.2]{ChudnovskyGeneralPoints}, locally complete intersection ideals $I$ a polynomial ring that are minimally generated by forms of degree lower than $\bight(I)$ \cite[Theorem 3.1]{GHM}, ideals $I$ of a local or standard graded regular ring $(R,\m,k)$ which contains a field so that $R/I$ is Gorenstein, $I^{(n)}=I^n:\m^\infty$ and either $k$ has positive characteristic or the symbolic Rees algebra of I is noetherian \cite{GHM2}.

Because of these considerations it becomes important to develop methods for determining the (asymptotic) resurgence of an ideal. In order to do this, it is helpful to investigate another asymptotic invariant.

\begin{definition}
\label{def:waldschmidt}
Let $I$ be an ideal of a graded ring and denote by $\alpha(I)$ the smallest degree of a nonzero homogeneous from in $I$. The {\em Waldschmidt constant} of $I$ is the value 
\[
\widehat{\alpha}(I)=\lim_{n\to \infty}\frac{\alpha(I^{(n)})}{n}=\inf_{n}\frac{\alpha(I^{(n)})}{n}.
\]
\end{definition}

For homogeneous ideals of a polynomial ring, the following inequalities discovered by Bocci and Harbourne often hold the key to computing resurgence.

\begin{theorem}[Bocci--Harbourne {\cite[Theorem 1.2.1]{BoH}}]
Let $I$ be a homogeneous ideal of a polynomial ring. Then there is an inequality 
$$\frac{\alpha(I)}{\widehat{\alpha}(I)}\leqslant \rho(I).$$
If, in addition, $I$ defines a 0-dimensional subscheme, then 
$$\rho (I) \leqslant \reg(I)/\widehat{\alpha}(I),$$
where $\reg(I)$ denotes the Castelnuovo-Mumford regularity of $I$.
\end{theorem}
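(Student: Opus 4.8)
The statement comprises two inequalities that I would prove by genuinely different means: the lower bound $\alpha(I)/\widehat{\alpha}(I) \leqslant \rho(I)$ is a soft degree-counting argument valid for every homogeneous ideal, whereas the upper bound $\rho(I) \leqslant \reg(I)/\widehat{\alpha}(I)$ is where the 0-dimensional hypothesis and Castelnuovo--Mumford regularity are essential.

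For the lower bound, the plan is to exhibit non-containment pairs $(a,b)$ whose ratios accumulate at $\alpha(I)/\widehat{\alpha}(I)$. First I would record the elementary identity $\alpha(I^b)=b\,\alpha(I)$: the inequality $\alpha(I^b)\geqslant b\,\alpha(I)$ holds because every element of $I^b$ is a sum of products of $b$ elements of $I$, and equality is witnessed by the $b$-th power of a minimal-degree generator. Now fix $n$ and let $b_n$ be the least integer with $b_n\,\alpha(I)>\alpha(I^{(n)})$. Then $I^{(n)}$ contains a nonzero form of degree $\alpha(I^{(n)})<b_n\,\alpha(I)=\alpha(I^{b_n})$, which therefore cannot lie in $I^{b_n}$; hence $I^{(n)}\not\subseteq I^{b_n}$ and so $n/b_n\leqslant\rho(I)$. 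Since $b_n=\lfloor \alpha(I^{(n)})/\alpha(I)\rfloor+1$ and $\alpha(I^{(n)})/n\to\widehat{\alpha}(I)$, we have $n/b_n\to\alpha(I)/\widehat{\alpha}(I)$, and passing to the limit gives $\alpha(I)/\widehat{\alpha}(I)\leqslant\rho(I)$.

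For the upper bound, the plan is to prove a containment criterion and then optimize. The criterion I would aim for is: \emph{if $I$ is the ideal of a 0-dimensional subscheme and $a\geqslant b\geqslant 1$, then every form in $I^{(a)}$ of degree at least $b\,\reg(I)$ lies in $I^b$}. I would establish this in three steps: (i) the monotonicity $I^{(a)}\subseteq I^{(b)}$ for $a\geqslant b$; (ii) the identity $I^{(b)}=(I^b)^{\sat}$, valid because deleting the embedded ($\m$-primary) component of $I^b$ leaves exactly the symbolic power of a 0-dimensional scheme; and (iii) the fact that $I^{(b)}$ and $I^b$ agree in all degrees beyond the satiety degree of $I^b$, together with the bound that this satiety degree is at most $\reg(I^b)\leqslant b\,\reg(I)$. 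Granting the criterion, suppose $a/b>\reg(I)/\widehat{\alpha}(I)$; since $\reg(I)\geqslant\alpha(I)\geqslant\widehat{\alpha}(I)$ this forces $a>b$. Every nonzero form $F\in I^{(a)}$ satisfies $\deg F\geqslant\alpha(I^{(a)})\geqslant a\,\widehat{\alpha}(I)>b\,\reg(I)$, using $\widehat{\alpha}(I)=\inf_n\alpha(I^{(n)})/n$, so $F\in I^b$ by the criterion and hence $I^{(a)}\subseteq I^b$. Thus no non-containment pair has ratio exceeding $\reg(I)/\widehat{\alpha}(I)$, which yields $\rho(I)\leqslant\reg(I)/\widehat{\alpha}(I)$.

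The main obstacle is step (iii), namely the linear regularity bound $\reg(I^b)\leqslant b\,\reg(I)$ for powers of the ideal of a 0-dimensional subscheme, and the resulting control on the degree past which $I^b$ becomes saturated. This is the only place where 0-dimensionality is truly used, and it is the geometric heart of the argument; everything else is degree bookkeeping and the standard dictionary identifying symbolic powers of 0-dimensional schemes with saturations. I would expect to cite this regularity estimate rather than reprove it, isolating it as the single technical input on which the upper bound rests.
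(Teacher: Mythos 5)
This theorem is stated in the survey only with a citation to Bocci--Harbourne; the paper contains no proof of its own, so there is nothing internal to compare against. Your argument is correct and is essentially the proof in the cited source: the lower bound via the identity $\alpha(I^b)=b\,\alpha(I)$ and the non-containment pairs $(n,b_n)$ with $n/b_n\to\alpha(I)/\widehat{\alpha}(I)$, and the upper bound via $I^{(b)}=(I^b)^{\mathrm{sat}}$ together with the regularity estimate $\reg(I^b)\leqslant b\,\reg(I)$ for ideals with $\dim R/I\leqslant 1$ (due to Chandler and to Geramita--Gimigliano--Pitteloud), which you rightly isolate as the one genuine external input.
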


However, resurgences and Waldschmidt constants remain elusive invariants. In the case of ideals having Noetherian symbolic Rees algebras, however, one can get a better handle on these invariants by expressing them in terms of finitely many symbolic power of ideals.

\begin{theorem}[{Drabkin--Guerrieri \cite[Theorem 3.6]{DrabkinGuerrieri}, DiPasquale--Drabkin \cite[Proposition 2.2, Corollary 3.6]{DiPasqualeDrabkin}}]
Suppose $I$ is an ideal of a polynomial ring which has noetherian symbolic Rees algebra. Then the Waldschmidt constant,  asymptotic resurgence, and the resurgence of $I$ are rational numbers and can be computed as follows
\begin{eqnarray*}
\widehat{\alpha}(I) &=&\min_{n\leqslant \gt(I)}\frac{\alpha(I^{(n)})}{n},  \\
\widehat{\rho}(I) &=&\max_{1\leqslant i\leqslant r, 1\leqslant j \leqslant \gt(I)}\left\{\frac{j\nu_i(I)}{\nu_i(I^{(j)})}\right \}, \text{ and } \\
\rho(I) &=&\begin{cases}
\max_{(a,b)\in \text{finite set}}\left\{ \frac{a}{b} \mid I^{(a)}\not\subseteq I^b\right\} & \text{ if } \rho(I)\neq \widehat{\rho}(I)\\ 
 \widehat{\rho}(I) & \text{otherwise},
\end{cases}
\end{eqnarray*}
where $\nu_1,\ldots, \nu_r$ denote the distinct Rees valuations\footnote{ For details on Rees valuations and their applications the reader is invited to consult \cite[\S10.1]{HunekeSwansonIntegral2006}.} of $I$ and the finite set in the last displayed equation is given explicitly  in  \cite[Proposition 2.2]{DiPasqualeDrabkin}.
\end{theorem}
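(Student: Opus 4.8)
The hypothesis that $\sra(I)$ is noetherian gives, via \Cref{noetherian equivalences}, a finite generation type $d := \gt(I)$ together with the decomposition
$$I^{(n)} = \sum_{a_1 + 2a_2 + \cdots + d a_d = n} I^{a_1}\left(I^{(2)}\right)^{a_2}\cdots \left(I^{(d)}\right)^{a_d}.$$
My plan is to feed this decomposition into the two functions $\alpha(-)$ and $\nu_i(-)$, both of which are additive on products of homogeneous ideals and take the minimum on sums. For the Waldschmidt constant, since the polynomial ring is a domain we have $\alpha(JK)=\alpha(J)+\alpha(K)$ and $\alpha(J+K)=\min(\alpha(J),\alpha(K))$, so the displayed formula yields $\alpha(I^{(n)}) = \min_{\sum_j j a_j = n}\sum_j a_j\,\alpha(I^{(j)})$. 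Writing $c_j := \alpha(I^{(j)})/j$, each ratio $\alpha(I^{(n)})/n$ is a minimum of weighted averages $\frac{\sum_j (ja_j)c_j}{\sum_j ja_j}$ of $c_1,\ldots,c_d$, hence is at least $\min_{j\leqslant d} c_j$; conversely, choosing $n=mj_0$ at a minimizing index $j_0$ and using subadditivity $\alpha(I^{(mj_0)})\leqslant m\,\alpha(I^{(j_0)})$ attains the value $c_{j_0}$. Taking $\inf_n$ gives $\widehat{\alpha}(I)=\min_{j\leqslant d}\alpha(I^{(j)})/j$, which is rational.

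For the asymptotic resurgence I would first invoke the valuation-theoretic description $\widehat{\rho}(I)=\max_i \nu_i(I)/\widehat{\nu}_i$, where $\widehat{\nu}_i := \lim_n \nu_i(I^{(n)})/n$; this rests on the fact that asymptotic non-containment $I^{(at)}\not\subseteq I^{bt}$ is detected by the Rees valuations and is insensitive to passing to integral closures. Because $I^{(m)}I^{(n)}\subseteq I^{(m+n)}$, the sequence $\nu_i(I^{(n)})$ is subadditive, so Fekete's lemma gives $\widehat{\nu}_i=\inf_n \nu_i(I^{(n)})/n$, and the argument of the previous paragraph, run now with $\nu_i$ in place of $\alpha$, produces $\widehat{\nu}_i=\min_{j\leqslant d}\nu_i(I^{(j)})/j$. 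Substituting yields the stated formula $\widehat{\rho}(I)=\max_{1\leqslant i\leqslant r,\,1\leqslant j\leqslant d} \frac{j\nu_i(I)}{\nu_i(I^{(j)})}$, visibly rational.

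The resurgence is the crux. Since $\widehat{\rho}(I)\leqslant \rho(I)$ always, it remains to treat the case $\rho(I)>\widehat{\rho}(I)$ and show the defining supremum is then a maximum over finitely many pairs. The key estimate: if $a/b\geqslant \widehat{\rho}(I)+\epsilon$, then subadditivity gives $\nu_i(I^{(a)})\geqslant a\widehat{\nu}_i$ for every $i$, and since $\nu_i(I)\leqslant \widehat{\rho}(I)\,\widehat{\nu}_i$ the excess $\nu_i(I^{(a)})-b\nu_i(I)\geqslant \epsilon b\,\widehat{\nu}_i$ grows linearly in $b$. Through the valuative criterion for integral closure this forces $I^{(a)}\subseteq \overline{I^{b'}}$ with $b'\geqslant b+c$ once $b$ is large, where $c$ is a uniform Brian\c{c}on--Skoda constant satisfying $\overline{I^{b+c}}\subseteq I^b$; hence $I^{(a)}\subseteq I^b$. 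Contrapositively, a non-containment $I^{(a)}\not\subseteq I^b$ with $a/b\geqslant \widehat{\rho}(I)+\epsilon$ forces $b$ to be bounded, and then $a<\bight(I)\,b$ is bounded too by \Cref{thm:containment}. Choosing $\epsilon$ with $\widehat{\rho}(I)+\epsilon<\rho(I)$ confines every ratio approaching $\rho(I)$ to this finite set of pairs, so $\rho(I)$ is attained on it and is rational.

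The hard part is making this last paragraph precise: I expect the main obstacle to be pinning down the Brian\c{c}on--Skoda constant $c$ and verifying that the linear growth of the valuative excess indeed pushes $b'$ past $b+c$ uniformly in the pair $(a,b)$. This uniformity is exactly what converts the supremum defining $\rho(I)$ into a maximum over the explicit finite set of \cite[Proposition 2.2]{DiPasqualeDrabkin}, and it is the step where the noetherianity of $\sra(I)$ (through the finitely many Rees valuations and the finite generation degree $d$) is genuinely used.
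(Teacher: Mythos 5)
The paper does not actually prove this theorem: it is quoted as a survey item, with the proofs deferred to Drabkin--Guerrieri and DiPasquale--Drabkin, so there is no in-paper argument to compare against. Judged on its own, your reconstruction follows essentially the same route as those sources and is sound in all three parts. The convexity argument for $\widehat{\alpha}$ (additivity of $\alpha$ on products, minimum on sums, each ratio $\alpha(I^{(n)})/n$ a weighted average of the $\alpha(I^{(j)})/j$ for $j\leqslant \gt(I)$) is correct and is exactly the Drabkin--Guerrieri mechanism; running it again with the Rees valuations $\nu_i$ in place of $\alpha$ is also right. Two remarks. First, the identity $\widehat{\rho}(I)=\max_i \nu_i(I)/\widehat{\nu}_i$ that you ``invoke'' is itself a nontrivial theorem (it is due to DiPasquale--Francisco--Mermin--Schweig, resting on the equality $\widehat{\rho}(I)=\sup\{a/b : I^{(a)}\not\subseteq \overline{I^b}\}$ and the valuative criterion for integral closure); it is a legitimate black box here but should be cited as such, since it is where most of the content of the second formula lives. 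Second, the step you flag as the ``hard part'' is in fact standard: in a regular ring the Brian\c{c}on--Skoda theorem gives a single constant $c$ (the dimension, or the analytic spread minus one) with $\overline{I^{m+c}}\subseteq I^m$ for all $m$ simultaneously, and your linear lower bound $\nu_i(I^{(a)})-b\nu_i(I)\geqslant \epsilon b\,\widehat{\nu}_i$ (note $\widehat{\nu}_i>0$, since otherwise $\widehat{\rho}(I)$ would be infinite, contradicting \Cref{thm:containment}) pushes the integral-closure exponent past $b+c$ once $b\geqslant c\,\widehat{\rho}(I)/\epsilon$; combined with $a<\bight(I)\,b$ from \Cref{thm:containment} this bounds the set of exceptional pairs exactly as in \cite[Proposition 2.2]{DiPasqualeDrabkin}. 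So the outline is complete; what remains is bookkeeping, not a missing idea.
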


\begin{remark}
If $I$ is an ideal of a polynomial ring which has noetherian symbolic Rees algebra, then in fact the Waldschmidt constant is determined by a single symbolic power corresponding to the standard Veronese degree. Indeed,  \Cref{def:waldschmidt} and \Cref{noetherian equivalences}(3) yield $\widehat{\alpha}(I)=\alpha(I^{(\svd(I))})/\svd(I)$.
\end{remark}
 
Ideals with irrational values of the Waldschmidt constant are expected to abound. Indeed, Nagata's conjecture \cite{Nagata59} would imply that the Waldschmidt constant of a radical ideal $I$ defining $s$ general points in $\P^2$ is $\widehat{\alpha}(I)=\sqrt{s}$, often producing an irrational value. However, no examples of ideals with confirmed irrational Waldschmidt constant, resurgence, or asymptotic resurgence have been constructed yet. Thus we propose the following task. 

\begin{problem}
Provide examples of ideals with irrational Waldschmidt constant, resurgence, or asymptotic resurgence.
\end{problem}

The lower bound $\widehat{\alpha}(I) \geqslant \alpha(I)/(d-1)$ holds for homogeneous ideals in a $d$ dimensional polynomial ring, and it follows easily from the containments in \Cref{thm:containment}. The details can be found in \cite{HaHu}, but the lower bound itself --- although phrased in a different language --- appears in work of Waldschmidt \cite{Waldschmidt} and Skoda \cite{Skoda}. Improvements on this lower bound have been proposed by Chudnovsky \cite{Chudnovsky} and Demailly \cite{Demailly} in relation to the difficult question of finding the least degree of a homogeneous polynomial vanishing at a given set of points in projective space to a prescribed order. The validity of the bounds suggested by Chudnovsky and Demailly follows if one can establish containments of the symbolic power ideals deeper within the ordinary powers than provided by \Cref{thm:containment}. We make these containments precise in \Cref{q:contChudDemailly} below, while also abstracting the bounds suggested by Chudnovsky and Demailly to the more general setting of homogeneous radical ideals in \Cref{q:ChudDemailly}.

\begin{question}
\label{q:contChudDemailly}
Let $I$ be either a radical ideal of big height $h$ in a regular local ring $(R, \m)$, or a homogeneous radical ideal of big height $h$ in a polynomial ring $R$ with maximal homogeneous ideal $\mathfrak{m}$. Do the following containments
\begin{equation}
\label{contChud}
I^{(rh)} \subseteq \m^{r(h-1)}I^r
\end{equation}
\begin{equation}
\label{contDemailly}
I^{(r(h+m-1))} \subseteq \m^{r(h-1)} \left({I^{(m)}}\right)^r
\end{equation}
hold for all $m,r\geqslant 1$?
\end{question}

Note that \eqref{contChud} is the particular case of \eqref{contDemailly} with $m=1$. These first appeared as a question for ideals of points in \cite[Question 4.2.3]{HaHu}, and the more general version for radical ideals of big height $h$ appeared in \cite[Conjecture 2.9]{CEHH2017}. Both containments are satisfied for squarefree monomial ideals by \cite[Corollary 4.3]{CEHH2017}, where in fact a stronger statement was shown \cite[Theorem 4.2]{CEHH2017}. Similar containment for the defining ideal of a general set of points in $\mathbb{P}^2$ were investigated in \cite{BCH2014}.

The validity of the containments in equations \eqref{contChud} and \eqref{contDemailly} of \Cref{q:contChudDemailly} would imply the bounds for Waldschmidt constants of  homogeneous radical ideals given in \eqref{ineqChud} and \eqref{ineqDemailly} respectively of the following question.

\begin{question}[Chudnovsky and Demailly type bounds on the Waldschmidt constant]
\label{q:ChudDemailly}
Let $I$ be a homogeneous radical ideal of big height $h$ in a polynomial ring $R$. Do the following inequalities
\begin{equation}
\label{ineqChud}
\frac{\alpha(I^{(n)})}{n} \geqslant \frac{\alpha(I) + h -1}{h} \text{ and thus }\widehat \alpha(I)\geqslant \frac{\alpha(I) + h -1}{h} 
\end{equation}
\begin{equation}
\label{ineqDemailly}
\frac{\alpha(I^{(n)})}{n}\geqslant \frac{\alpha(I^{(m)}) + h - 1}{m+h-1}   \text{ and thus }\widehat \alpha(I)\geqslant \frac{\alpha(I^{(m)}) + h -1}{h}
\end{equation}
hold for all $n, m\geqslant 1$?
\end{question}

An affirmative answer to \Cref{q:ChudDemailly} \eqref{contChud} has been given for ideals defining general points in $\P^2$ in \cite{HaHu}, for ideals defining general sets of projective points of sufficiently large cardinality in  \cite{MTGChudnovsky}, for very general sets of points in arbitrary projective spaces in \cite{FMXChudnovsky}, and for ideals defining sufficiently many general sets of points in projective space in \cite{ChudnovskyGeneralPoints}, where \Cref{q:contChudDemailly}  \eqref{contChud} is shown to hold for $r \gg 0$. \Cref{q:ChudDemailly} \eqref{ineqDemailly} is answered in the affirmative for general points in $\P^2$ by Esnault and Viehweg \cite{EsnaultViehweg} and for very general sets of projective points of sufficiently large cardinality in arbitrary projective spaces by work of Malara, Szemberg and Szpond \cite{MSSDemailly}, extended by Chang and Jow \cite{CJDemailly}. Recently, an affirmative answer to \Cref{q:ChudDemailly} \eqref{ineqDemailly} has also been established for sufficiently large general sets of points in arbitrary projective spaces by Bisui, Grifo, H\`a and Nguy$\tilde{\text{\^e}}$n in \cite{DemaillyGeneralPoints}, where an affirmative answer to \Cref{q:contChudDemailly} \eqref{contDemailly} is also provided in the same context for infinitely many values of $r$, although not for all $r$ or even $r \gg 0$. Outside of the context of points, the answer to all of these questions is also affirmative for generic determinantal ideals and the defining ideals of star configurations in any codimension \cite{DemaillyGeneralPoints}.

However, both of the above questions remain open in the form stated here, and are open even for  ideals having noetherian symbolic Rees algebra. 

\begin{remark}
Suppose that $I$ has a finitely generated symbolic Rees algebra. Since the limit in the definition of $\widehat\alpha(I)$ exists, we obtain
$$\widehat\alpha(I) = \lim_{n \rightarrow \infty} \frac{\alpha(I^{(\svd(I) n)})}{\svd(I)n} = \frac{\alpha(I^{(\svd(I))})}{\svd(I)}.$$
Alternatively, since $\widehat\alpha(I)$ is also given as an infimum, one can compute $\widehat\alpha(I)$ by taking

$$\widehat\alpha(I) = \min \left\lbrace \alpha(I), \frac{\alpha(I^{(2)})}{2}, \ldots, \frac{\alpha(I^{\gt(I)})}{\gt(I)} \right\rbrace.$$
As a consequence, the containments \eqref{contChud} and \eqref{contDemailly} of \Cref{q:ChudDemailly} can be reduced to checking only those instances with $r\leqslant \gt(I)$ or, alternatively, only the case $r=\svd(I)$. Similarly, the inequalities \eqref{ineqChud} and \eqref{ineqDemailly} of \Cref{q:ChudDemailly} reduce to checking 
\[
\frac{\alpha(I^{(n)})}{n} \geqslant \frac{\alpha(I) + h -1}{h} \text{ and }\frac{\alpha(I^{(n)})}{n} \geqslant \frac{\alpha(I^{(m)}) + h -1}{m+h-1} \text{ for } 1\leqslant n\leqslant \gt(I) \text{ and } m\geqslant 1
\]
or, equivalently, 
\[
 \frac{\alpha(I^{(\svd(I))})}{\svd(I)} \geqslant \frac{\alpha(I) + h -1}{h} \text{ and } \frac{\alpha(I^{(\svd(I))})}{\svd(I)} \geqslant \frac{\alpha(I^{(m)}) + h -1}{m+h-1} \text{ for }  m\geqslant 1.
\]
\end{remark}

\vspace{2em}

\paragraph{ \bf Acknowledgements}
The first author is supported by NSF grant DMS-2001445. The second author is supported by NSF grant DMS-1601024.
We thank Craig Huneke for help with tracking down the history of the terminology ``symbolic Rees algebra", and Jos\'e Gonzalez for discussions regarding Mori dream spaces. We also thank Thomas Polstra for pointing us to \cite{HKVcor}, and Elena Guardo for finding a typo in a previous version of the paper. Finally, we thank Anurag Singh for very detailed comments on an earlier version.


\newcommand{\etalchar}[1]{$^{#1}$}
\providecommand{\bysame}{\leavevmode\hbox to3em{\hrulefill}\thinspace}
\providecommand{\MR}{\relax\ifhmode\unskip\space\fi MR }
\providecommand{\MRhref}[2]{%
  \href{http://www.ams.org/mathscinet-getitem?mr=#1}{#2}
}
\providecommand{\href}[2]{#2}

\end{document}